\definecolor{LinkColor}{rgb}{0,0,1}
\definecolor{lbcolor}{rgb}{0.85,0.85,0.85}
\definecolor{FrameColor}{rgb}{0.85,0.85,0.85}
\def\pskip{\\[-3mm]}
\newtheoremstyle{tstyle}
{15pt}	
{5pt}	
{\itshape}	
{}	
{\bfseries}	
{.}	
{0.5em}	
{}	
\theoremstyle{tstyle}
\newtheorem{thm}{Theorem}[section]
\newtheorem{prop}[thm]{Proposition}
\newtheorem{cor}[thm]{Corollary}
\newtheorem{defn}[thm]{Definition}
\newtheoremstyle{cstyle}
{15pt}	
{5pt}	
{}	
{}	
{\bfseries}	
{}	
{0.2222em}	
{}	
\theoremstyle{cstyle}
\newtheorem*{com}{Comment}
\g@addto@macro{\thm@space@setup}{\thm@headpunct{}}
\renewenvironment{proof}[1][\proofname]{\par
	\pushQED{\qed}%
	\normalfont \topsep6\p@\@plus6\p@\relax
	\trivlist
	\item[\hskip\labelsep
	\bfseries
	#1\@addpunct{\,}]\ignorespaces
}{%
	\popQED\endtrivlist\@endpefalse
}
\def\RR{\mathbb R}
\def\VV{{\mathcal{V}}}
\def\WW{{\mathcal{W}}}
\def\NN{\mathbb N}
\def\BB{{\mathbb B_K}}
\def\CC{{\mathcal C_{\bar u}}}
\def\IBB{\mathring{\mathbb B}_{K}}
\def\IBBH{\mathring{\mathbb B}_{K/2}}
\def\UU{{{\mathbb U}}}
\def\PP{\mathbb P}
\def\DD{{L^2([0,T];\RR^N)}}
\def\F{{\mathfrak{S}}}
\def\S{{\mathcal{S}}}
\def\A{{\mathcal{A}}}
\def\eps{\varepsilon}
\def\supp{\textnormal{supp\,}}
\def\BR{{B_R(0)}}
\def\dtau{\;\mathrm d\tau}
\def\dx{\;\mathrm dx}
\def\dy{\;\mathrm dy}
\def\dt{\;\mathrm dt}
\def\dxv{\;\mathrm d(x,v)}
\def\dtxv{\;\mathrm d(t,x,v)}
\def\delt{\partial_{t}}
\def\delx{\partial_{x}}
\def\delv{\partial_{v}}
\def\stimes{{\hspace{-0.01cm}\times\hspace{-0.01cm}}}
\def\scdot{{\hspace{1pt}\cdot\hspace{1pt}}}
\def\laplace{\Delta}
\def\divergence{\textnormal{div}}
\def\bigvert{\;\big\vert\;}
\def\tand{\quad\text{and}\quad}
\def\twith{\quad\text{with}\quad}
\def\u{{\bar u}}
\def\tf{{\tilde f}}
\def\th{{\tilde h}}
\def\tg{{\tilde g}}
\def\tu{{\tilde u}}
\def\tB{{\tilde B}}
\def\Lag{{\mathcal{L}}}
\def\wto{\rightharpoonup}
\def\itema{\item[\textnormal{(a)}]}
\def\itemb{\item[\textnormal{(b)}]}
\newcommand{\Underset}[3][0pt]{\ensuremath{\underset{\raise#1\hbox{\small\ensuremath{#2}}}{#3}}}
\begin{document}
\begin{center}	
	\LARGE{\bfseries Optimal control of a Vlasov-Poisson plasma \\ by fixed magnetic field coils}\\[8mm]
	
	\normalsize{Patrik Knopf}\\[2mm]
	\textit{University of Regensburg, 93040 Regensburg, Bavaria, Germany}\\[2mm]
	\texttt{Patrik.Knopf@ur.de}\\[-3mm]
	
	\begin{minipage}[h]{0.275\textwidth}
		\begin{flushright}
			\vspace{-2pt}
			\includegraphics[scale=0.05]{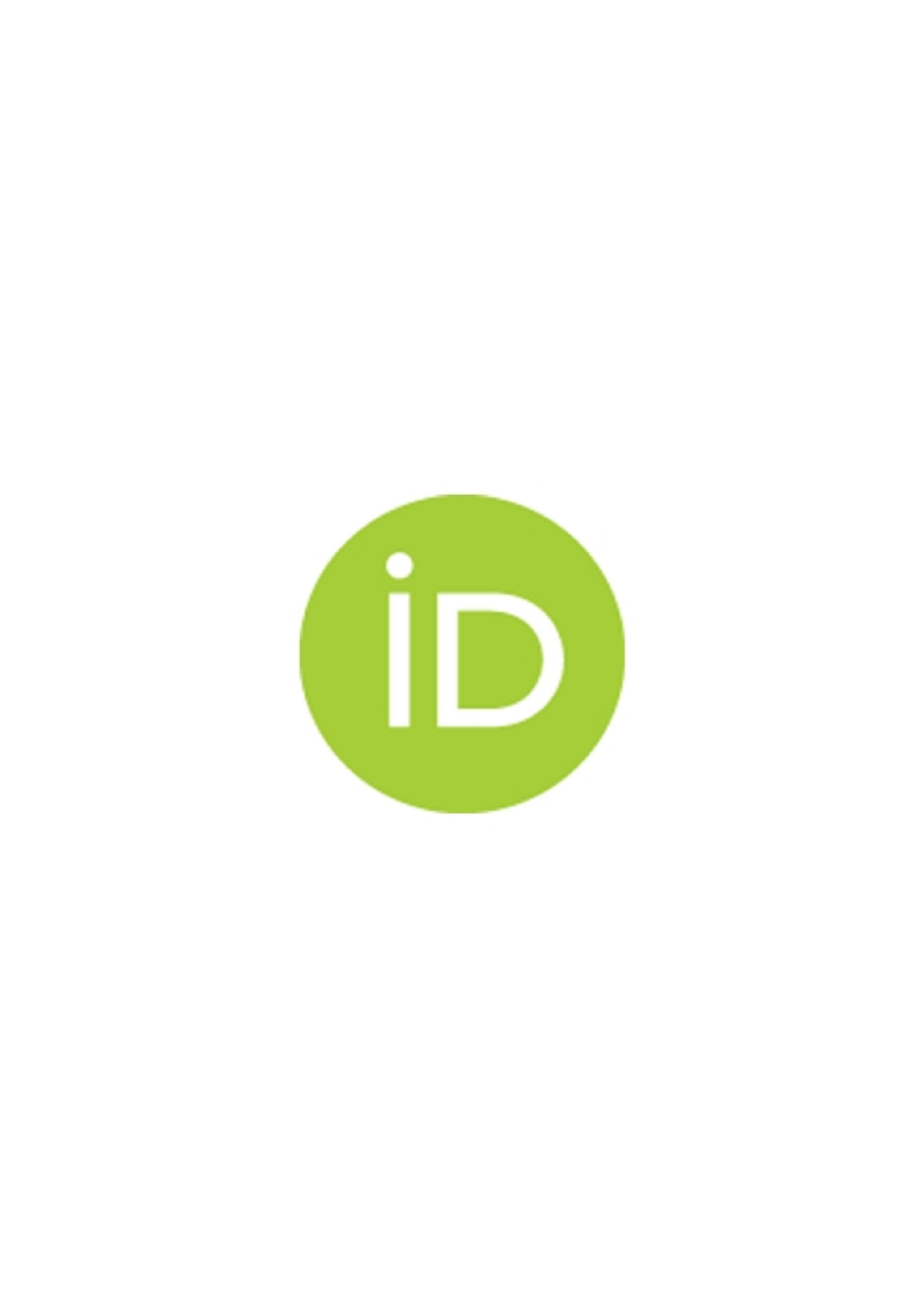} 
		\end{flushright}
	\end{minipage}
	\begin{minipage}[h]{0.5\textwidth}
		\hspace{-12pt}
		\href{https://orcid.org/0000-0003-4115-4885}{orcid.org/0000-0003-4115-4885}
	\end{minipage}
	\vspace{2mm}

	\normalsize{Jörg Weber}\\[2mm]
	\textit{University of Bayreuth, 95440 Bayreuth, Bavaria, Germany}\\[2mm]
	\texttt{Joerg.Weber@uni-bayreuth.de}\\[8mm]
	
	\textit{Please cite as:}  P. Knopf, and J. Weber, Appl Math Optim (2018) \\ \url{https://doi.org/10.1007/s00245-018-9526-5}
	
\end{center}
%
%
\date{\today}
\selectlanguage{english}
\begin{abstract}
	We consider the Vlasov-Poisson system that is equipped with an external magnetic field to describe the time evolution of the distribution function of a plasma. An optimal control problem where the external magnetic field is the control itself has already been investigated in \cite{knopf}. However, in real technical applications it will not be possible to choose the control field in such a general fashion as it will be induced by fixed field coils. In this paper we will use the fundamentals that were established in \cite{knopf} to analyze an optimal control problem where the magnetic field is a superposition of the fields that are generated by $N$ fixed magnetic field coils. Thereby, the aim is to control the plasma in such a way that its distribution function matches a desired distribution function at some certain final time $T$ as closely as possible. This problem will be analyzed with respect to the following topics: Existence of a globally optimal solution, necessary conditions of first order for local optimality, derivation of an optimality system, sufficient conditions of second order for local optimality and uniqueness of the optimal control under certain conditions.\\
	\phantom{}\\
	\textit{Keywords: Vlasov-Poisson equation, optimal control with PDE constraints, nonlinear partial differential equations, calculus of variations.}\\
	\phantom{}\\
	MSC Classification: 49J20, 35Q83, 82D10.
\end{abstract}

%
%
%

\section{Introduction}
The three dimensional Vlasov-Poisson system in the plasma physical case is given by the following nonlinear system of partial differential equations:
\begin{equation}
\label{VP}
\begin{cases}
\partial_t f + v\cdot \partial_x f - \partial_x \psi \cdot \partial_v f = 0,\\[0.15cm]
- \Delta \psi = 4\pi \rho, \quad \lim_{|x|\to\infty} \psi(t,x) = 0,\\[0.15cm]
\rho(t,x) = \int f(t,x,v)\ \mathrm dv.
\end{cases}
\end{equation}
Here $f=f(t,x,v)\ge 0$ denotes the distribution function of the particle ensemble that is a scalar function representing the density in phase space. The time evolution of $f$ is described by the first line of \eqref{VP} which is a first order partial differential equation that is referred to as the Vlasov equation. For any measurable set $M\subset \mathbb R^6$,
$\int_M f(t,x,v)\, \mathrm d(x,v)$
represents the total charge of the particles that have space coordinates $x\in\mathbb R^3$ and velocity coordinates $v\in\mathbb R^3$ with $(x,v)\in M$ at time $t\ge 0$. The function $\psi$ is the electrostatic potential that is induced by the charge of the particles. It is given by Poisson's equation $-\laplace \psi = 4\pi\rho$ with an homogeneous boundary condition where $\rho$ denotes the volume charge density. The self-consistent electric field is then given by $-\partial_x \psi$. Note that both $\psi$ and $-\partial_x \psi$ depend linearly on $f$. Hence the Vlasov-Poisson system is nonlinear due to the term $-\partial_x \psi \cdot \partial_v f$ in the Vlasov equation. Assuming $f$ to be sufficiently regular (e.g., $f(t) := f(t,\cdot,\cdot)\in C^1_c (\mathbb R^6)$ for all $t \ge 0$), we can solve Poisson's equation explicitly and obtain
\begin{equation}
\label{PSIF}
\psi_f(t,x) = \iint \frac{f(t,y,w)}{|x-y|} \;\mathrm dw \mathrm dy, \qquad \delx\psi_f(t,x) = -\iint \frac{x-y}{|x-y|^3} f(t,y,w)\,\mathrm dw \mathrm dy 
\end{equation}
Considering $f\mapsto \psi_f$ (or $f\mapsto -\delx\psi_f$ respectively) to be a linear operator we can formally rewrite the Vlasov-Poisson system as
\begin{equation}
\label{VP2}
\partial_t f + v\cdot \partial_x f - \partial_x \psi_f \cdot \partial_v f = 0.
\end{equation}
The function $f$ can be considered as the state variable. Later, we will additionally introduce a control function. Combined with the condition
\begin{equation}
\label{IC}
f|_{t=0} = \mathring f
\end{equation}
for some function $\mathring f\in C^1_c(\RR^6)$ we obtain an initial value problem. A first local existence and uniqueness result to this initial value problem was proved by R.~Kurth \cite{kurth}. Later J. Batt \cite{batt} established a continuation criterion which claims that a local solution can be extended as long as its velocity support is under control. Finally, two different proofs for global existence of classical solutions were established independently and almost simultaneously, one by K. Pfaffelmoser \cite{pfaffelmoser} and one by P.-L. Lions and B. Perthame \cite{lions-perthame}. Later, a greatly simplified version of Pfaffelmoser's proof was published by J. Schaeffer \cite{schaeffer}. This means that the follwing result is established: Any nonnegative initial datum $\mathring f \in C^1_c(\mathbb R^6)$ launches a global classical solution $f\in C^1([0,\infty[\times\mathbb R^6)$ of the Vlasov-Poisson system \eqref{VP} satisfying the initial condition \eqref{IC}. Moreover, for every time $t\in[0,\infty[$, $f(t)=f(t,\cdot,\cdot)$ is compactly supported in $\mathbb R^6$.
Hence equation \eqref{PSIF} and the reformulation of the Vlasov-Poisson system \eqref{VP2} are well-defined in the case $\mathring f \in C_c^1(\mathbb R^6)$. For more information we recommend to consider the review article \cite{rein} by G. Rein that gives an overview on the most important results.\pskip

To control the distribution function $f$ we add an external magnetic field $B=B(t,x)$ to the Vlasov equation:
\begin{equation}
\label{VPC}
\partial_t f + v\cdot \partial_x f - \partial_x \psi_f \cdot \partial_v f + (v\times B) \cdot \partial_v f = 0, \quad f|_{t=0} = \mathring f\;.
\end{equation}
The cross product $v\times B$ occurs since, unlike the electric field, the magnetic field interacts with the particles via Lorentz force. From a theoretical  point of view, the magnetic field $B$ can be thought of as the control itself. In~\cite{knopf}, an optimal control problem for the state equation \eqref{VPC} was investigated: The basics for calculus of variations were established and a model problem of optimal control with a tracking type cost functional was studied. For the analysis, high regularity assumptions on the control fields have been necessary. This, however, is not desirable in terms of optimal control theory and brings several disadvantages. Especially, the derivation of optimality conditions is restricted by these strong regularity requirements.\pskip

Also, from a technical perspective, it will not be possible to create magnetic control fields ad libitum. Therefore, in this paper, we suppose that the exterior magnetic field is induced by a finite number of fixed field coils. We assume that the current density $J_i$ and the induced magnetic field $B_i$ of the $i$-th coil are given by
\begin{align*}
	J_i(t,x) = j_i(t)\, c_i(x) \tand B_i(t,x) = u_i(t)\, m_i(x).
\end{align*}
with functions $c_i,m_i:\RR^3\to\RR^3$ and $j_i,u_i:\RR\to\RR$. The function $c_i$ corresponds to the geometry of the coil while $j_i$ corresponds to the intensity of the current that flows through the coil. Without loss of generality, we may assume that $c_i$ has compact support. For simplicity, we suppose that the field $B_i$ is determined by the Biot-Savart law which is a magnetostatic approximation of Maxwell's equations. This is a very common approach in physics and engineering science to describe the magnetic field induced by a field coil. We obtain
\begin{align*}
	-u_i(t)\, \nabla\times m_i(x) = -\nabla\times B_i(t,x) = -4\pi\, J_i(t,x) = -4\pi j_i(t)\, c_i(x)
\end{align*}
which is solved by
\begin{align*}
	u_i(t) = j_i(t) \tand m_i(x) = \int\limits_{\RR^3} \frac{c_i(y)\times(x-y)}{|x-y|^3} \dy.
\end{align*}
Note that $m_i$ is source-free (i.e., $\divergence_x(m_i)=0$). Then, due to linear superposition, the total external magnetic field is given by
\begin{align*}
	B(u)(t,x)=\sum_{i=1}^N u_i(t)m_i(x). 
\end{align*}
Now, we will take the vector $u=(u_1,...,u_N)^T$ to be the control in our model. The complete magnetic field will be described by the control-to-field operator $u\mapsto B(u)$. Mathematically, this brings a lot of advantages compared to the model discussed in \cite{knopf}. This is mainly because no stronger regularity assumptions than $u\in L^2([0,T];\RR^N)$ are needed. Therefore, the set of admissible controls can be chosen as a box restricted subset of $L^2([0,T];\RR^N)$ which gives us better possibilities for establishing optimality conditions compared to \cite{knopf}. For instance, we can show that a locally optimal control satisfies a certain projection formula. From this formula we derive the optimality system which in turn can be used to obtain a uniqueness result on small time intervals. Altogether, the model we study in this paper is also more viable for possible numerical implementations. \pskip

To establish the fundamentals of calculus of variations and optimal control for our model, we can basically exploit the theory that was developed in \cite{knopf}. In the next section we will sketch the most important essentials.\pskip

Finally, we want to mention some works on similar topics we could find in literature. The controllability of the Vlasov-Poisson system where the control appears as a right-hand side of the Vlasov equation was studied by O.~Glass and D.~Han~Kwan \cite{glass0,glass1}. They also established a similar result for the relativistic Vlasov-Maxwell system \cite{glass2}. The time evolution and magnetic confinement of Vlasov-Poisson plasmas under the influence of an exterior magnetic field was investigated by S.~Caprino, G.~Cavallaro and C.~Marchioro \cite{caprino3,caprino,caprino2}. For further results on magnetic confinement of plasmas modeled by Vlasov equations see, e.g, \cite{nguyen2,nguyen,skubachevskii}.

\section{Notation, preliminaries and basics}

Our notation is mostly standard or self-explaining. However, to avoid misunderstandings, we fix some of it here.\pskip

Let $d\in\NN$, $U\subset \RR^d$ be any open subset and $k\in \NN$ be arbitrary. $C^k(U)$ denotes the space of $k$ times continuously differentiable functions on $U$, $C^k_c(U)$ denotes the space of $C^k(U)$-functions having compact support in $U$ and $C^k_b(U)$ denotes the space of $C^k(U)$-functions that are bounded with respect to the norm
\begin{align*}
\|u\|_{C^k_b(U)} := \underset{|\alpha|\le k}{\sup} \; \|D^\alpha u\|_\infty = \underset{|\alpha|\le k}{\sup} \; \underset{x\in U}{\sup}\; |D^\alpha u(x)|.
\end{align*}
Note that $\big( C^k_b(U), \|\cdot\|_{C^k_b(U)} \big)$ is a Banach space. \pskip

For any measurable subset $U\subset\RR^d$, $1\le p\le \infty$ and $k\in\NN$, $L^p(U)$ denotes the standard $L^p$-space on $U$ and $W^{k,p}(U)$ denotes the standard Sobolov space on $U$ as, for instance, defined by E. Lieb and M. Loss in \cite[s.\,2.1,6.7]{lieb-loss}. Endowed with their standard norms defined by
\begin{align*}
	\|u\|_{L^p(U)}= \left(\int\limits_U |u(x)|^p \dx \right)^{\frac 1 p} \tand \|u\|_{W^{k,p}(U)} = \left(\sum_{|\alpha|\le k} \|\partial^\alpha u(x)\|_{L^p(U)}^p\right)^{\frac 1 p}
\end{align*}
the spaces $L^p(U)$ and $W^{k,p}(U)$ are Banach spaces. The space $L^2(U)$ is even a Hilbert space with the inner product
\begin{align*}
	\langle u,v \rangle_{L^2(U)} = \int\limits_U u(x)\, v(x) \dx.
\end{align*}
By $L^p(U;\RR^n)$ and $W^{k,p}(U;\RR^n)$ (with $n\in\NN$) we denote the spaces of vector valued functions $u:U\to \RR^n$ whose components lie in $L^p(U)$ or in $W^{k,p}(U)$ respectively. The standard norms on $L^p(U;\RR^n)$ and $W^{k,p}(U;\RR^n)$ are defined analogously where the symbol $|\cdot|$ now stands for the euclidean norm in $\RR^n$.
If $U=\RR^d$ we will often omit the argument "$(\RR^d)$" or "$(\RR^d;\RR^n)$".\pskip

We will also use Banach space valued $L^p$-spaces (so-called Bochner spaces) as defined by L. C. Evans \cite[p.\,301-305]{evans}. For any Banach space $X$ and any real number $T>0$, $L^p(0,T;X)$ denotes the space of Banach space valued $L^p$-functions $[0,T]\ni t\mapsto u(t) \in X$. In this case, $L^p(0,T;X)$ is also a Banach space with the standard norm
\begin{align*}
\|u\|_{L^p(0,T;X)}= \left(\int\limits_0^T \|u\|_X \dx \right)^{\frac 1 p}.
\end{align*}

\begin{prop}
	\label{SAC}
	For $T>0$ and $\beta>3$ we define the Banach space $\WW$ by 
	\begin{gather*}
	\WW:= L^2\big(0,T;W^{2,\beta}(\RR^3;\RR^3)\big) \twith \|\cdot\|_\WW := \|\cdot\|_{L^2(0,T;W^{2,\beta}(\RR^3;\RR^3))}.
	\end{gather*}
	For any $K>0$, the closed ball\vspace{-3mm}
	\begin{align*}
	\BB:=\Big\{ B\in\WW \;\Big\vert\; \|B\|_\WW \le K \Big\}
	\end{align*}
	is referred to as the \textbf{set of admissible fields} (with radius $K$).  Then the following holds:
	\begin{enumerate}
		\item[\textnormal{(a)}]	$\BB$ is a bounded, convex and closed subset of $\WW$.
		\item[\textnormal{(b)}]	$\BB$ is a subset of $L^2\big(0,T;C^{1,\gamma}(\RR^3;\RR^3)\big)$ with $\gamma = 1- \nicefrac 3 \beta$ and there exists some constant $C>0$ that depends only on $\beta$ such that for all $B\in\BB$, \vspace{-1mm}
		$$ \|B(t)\|_{C^{1,\gamma}} \le C\, \|B(t)\|_{W^{2,\beta}} \quad \text{for almost all}\; t\in [0,T].$$
		\item[\textnormal{(c)}]	$\BB$ is a relatively weakly sequentially compact subset of $\WW$.
	\end{enumerate}
\end{prop}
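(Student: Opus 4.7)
The plan is to handle the three assertions separately, since each reduces to a standard Banach space / embedding fact applied to the specific space $\WW=L^2(0,T;W^{2,\beta}(\RR^3;\RR^3))$.

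For part (a), I would simply check the three set-theoretic properties directly from the definition. Boundedness is tautological. For convexity, take $B_1,B_2\in\BB$ and $\lambda\in[0,1]$ and use the triangle inequality in $\WW$ to bound $\|\lambda B_1+(1-\lambda)B_2\|_\WW \le \lambda K+(1-\lambda)K=K$. For closedness, take a sequence $(B_n)\subset\BB$ converging strongly to some $B\in\WW$; continuity of the norm then gives $\|B\|_\WW=\lim_n\|B_n\|_\WW\le K$, so $B\in\BB$. No obstacle here.

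For part (b), the key ingredient is Morrey's embedding theorem: since $\beta>3$, one has the continuous embedding $W^{1,\beta}(\RR^3)\hookrightarrow C^{0,\gamma}(\RR^3)$ with $\gamma=1-3/\beta$, hence, applied to a function together with its first-order derivatives, $W^{2,\beta}(\RR^3;\RR^3)\hookrightarrow C^{1,\gamma}(\RR^3;\RR^3)$ with an embedding constant $C=C(\beta)$ independent of $t$. This directly yields the pointwise-in-time estimate $\|B(t)\|_{C^{1,\gamma}}\le C\,\|B(t)\|_{W^{2,\beta}}$ for a.e.\ $t\in[0,T]$, which, after squaring and integrating, also shows $B\in L^2(0,T;C^{1,\gamma}(\RR^3;\RR^3))$. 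The only thing to be careful about is measurability of $t\mapsto\|B(t)\|_{C^{1,\gamma}}$, which follows from the measurability of $t\mapsto B(t)$ as a $W^{2,\beta}$-valued function and the continuity of the embedding.

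For part (c), the standard route is via reflexivity. Since $1<\beta<\infty$, the Sobolev space $W^{2,\beta}(\RR^3;\RR^3)$ is reflexive; combined with the exponent $p=2\in(1,\infty)$, the Bochner space $\WW=L^2(0,T;W^{2,\beta}(\RR^3;\RR^3))$ is therefore reflexive as well. By the Banach–Alaoglu/Kakutani theorem, the closed ball $\BB$ is weakly compact in $\WW$, and by the Eberlein–Šmulian theorem this is equivalent to weak sequential compactness, which gives the claim. I expect this to be the step most likely to hide a subtle issue, namely verifying reflexivity of the Bochner space; but under the hypotheses $1<\beta<\infty$ and $1<p<\infty$ this is a classical result that can simply be cited.
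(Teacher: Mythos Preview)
Your proposal is correct and follows exactly the standard route one would expect: direct verification for (a), Morrey's embedding $W^{2,\beta}(\RR^3)\hookrightarrow C^{1,\gamma}(\RR^3)$ for (b), and reflexivity of $L^2(0,T;W^{2,\beta})$ together with Banach--Alaoglu/Eberlein--\v{S}mulian for (c). The paper itself does not spell out a proof but simply refers to \cite[Lem.\,3]{knopf}, noting that the argument there (stated for $\VV=\WW\cap L^2(0,T;H^1)$) goes through verbatim for $\WW$; what you have written is precisely the content of that reference, so there is nothing to compare.
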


\pagebreak[2]

This result has been established in \cite[Lem.\,3]{knopf} for $\VV:=\WW\cap L^2(0,T;H^1)$ instead of $\WW$. However, in the following approach it will not be necessary that the magnetic fields are in $L^2(0,T;H^1)$ so we will just drop this condition. One can easily see that the proofs of \cite{knopf} that we are referring to hold true in this case.\pskip

It was also proved in \cite[Thm.\,2]{knopf} that any admissible field $B$ has its corresponding unique solution of the initial value problem \eqref{VPC}. However, as these fields are only $L^2$ in time, the solution is not classical but merely strong.

\begin{prop}
	\label{GWS}
	Let $B\in\BB$ be any admissible field and suppose that ${\mathring f \in C^2_c(\RR^6)}$. Then there exists a unique strong solution $f_B$ of the initial value problem \eqref{VPC} to the field $B$, i.e., the following items hold:
	\begin{enumerate}
		\item[\textnormal{(a)}]		$f_B\in W^{1,2}(0,T;C_b(\RR^6))\cap C([0,T];C^1_b(\RR^6))\cap L^\infty(0,T;W^{2,\beta}(\RR^6))$ with
		\begin{gather*}
		\|f_B(t)\|_p = \|\mathring f\|_p\;,\quad t\in [0,T]\;,\quad 1\le p\le \infty,\\
		\|f_B\|_{W^{1,2}(0,T;C_b)} + \|f_B\|_{C([0,T];C^1_b)} + \|f_B\|_{L^\infty(0,T;W^{2,\beta})} \le C
		\end{gather*}
		for some constant $C>0$ depending only on $\mathring f$, $T$, $K$ and $\beta$.
		\item[\textnormal{(b)}]	$f_B$ satisfies the Vlasov equation
		\begin{align*} 
		\delt f_B + v\cdot \partial_x f_B - \partial_x \psi_{f_B} \cdot \partial_v f_B + (v\times B) \cdot \partial_v f_B= 0
		\end{align*}
		almost everywhere on $[0,T]\times\RR^6$.
		\item[\textnormal{(c)}]	$f_B$ satisfies the initial condition $f_B\big\vert_{t=0}=\mathring f$ everywhere on $\RR^6$.
		\item[\textnormal{(d)}]	There exists some radius $R>0$ depending only on $\mathring f$, $T$, $K$ and $\beta$ such that $ \supp f_B(t) \subset \BR$ for all $t\in[0,T]$.
	\end{enumerate}
\end{prop}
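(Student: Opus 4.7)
The plan is to recycle the existence proof of \cite[Thm.\,2]{knopf}, which established the analogous statement when $\BB$ was the corresponding ball in the slightly smaller space $\VV = \WW \cap L^2(0,T;H^1)$. As already indicated in the paragraph preceding the proposition, the additional $L^2(0,T;H^1)$-control was never actually used in that proof: the only regularity of $B$ that enters the characteristic analysis is the $L^2(0,T;C^{1,\gamma}(\RR^3;\RR^3))$-bound provided by Proposition~\ref{SAC}(b), which still holds in the present setting. Hence the argument carries over essentially verbatim, and I only sketch the structure.

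First, I would approximate $B \in \BB$ by smooth fields $B_k \in C^\infty_c([0,T]\stimes\RR^3;\RR^3)$ with $B_k \to B$ in $\WW$ and $\|B_k\|_\WW \le 2K$. For each $B_k$ the Pfaffelmoser/Schaeffer existence theory, extended to include the smooth Lorentz term $(v\stimes B_k)\scdot\partial_v f$, provides a global classical solution $f_k := f_{B_k}$ of \eqref{VPC}. Because the phase-space vector field is divergence-free (note that $\divergence_v(v\stimes B_k) = 0$), the associated flow is measure preserving, so all $L^p$-norms $\|f_k(t)\|_p = \|\mathring f\|_p$ are conserved.

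Next, I would derive $k$-uniform a priori estimates. Along the characteristic curves $(X_k,V_k)$ the magnetic force obeys $|V_k \stimes B_k| \le |V_k|\,\|B_k(t)\|_\infty$, so Gronwall applied to $|V_k|$ together with the standard Pfaffelmoser bound on $\partial_x \psi_{f_k}$ yields a radius $R > 0$, depending only on $\mathring f$, $T$, $K$, and $\beta$, such that $\supp f_k(t) \subset \BR$ for every $t\in[0,T]$ and every $k$. Differentiating the Vlasov equation along characteristics and inserting the $L^2_t(C^{1,\gamma}_x)$-bound of Proposition~\ref{SAC}(b) produces uniform $C([0,T];C^1_b)$- and $L^\infty(0,T;W^{2,\beta})$-bounds on $f_k$; reading off $\delt f_k$ directly from the Vlasov equation then yields the $W^{1,2}(0,T;C_b)$-bound, so that (a) and (d) will hold in the limit.

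Passing to the limit is a combination of Arzel\`a-Ascoli on the compact set $[0,T]\stimes\overline{\BR}$ (using the uniform $C^1$-bound in space and H\"older equicontinuity in $t$ coming from the $W^{1,2}(C_b)$-bound) together with weak-$*$ compactness for the higher-regularity estimates. In the nonlinearity, $\partial_x\psi_{f_k} \to \partial_x\psi_{f_B}$ locally uniformly via the explicit Newton-potential representation \eqref{PSIF} and the uniform support control, while $(v\stimes B_k)\scdot\partial_v f_k \to (v\stimes B)\scdot\partial_v f_B$ in $L^1(0,T;L^\infty(\overline{\BR}))$ since $B_k \to B$ in $L^2(0,T;C_b)$ by Sobolev embedding and $\partial_v f_k$ is uniformly bounded; this gives (b) and (c). Uniqueness follows from a standard Gronwall estimate on $\|f_1(t)-f_2(t)\|_\infty$ along characteristics, using the Lipschitz-type estimate for $\partial_x \psi_{f_1} - \partial_x \psi_{f_2}$ (based on the uniform support) together with the $L^2_t(L^\infty_x)$-bound on $B$. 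The main technical subtlety -- and the only place where one must be careful compared to a purely classical setting -- is that $B$ carries only $L^2$-regularity in time, so every pointwise-in-$t$ Gronwall inequality has to be replaced by its integral version driven by $\int_0^t \|B(s)\|_\infty^2 \ds$; this, however, is precisely what Proposition~\ref{SAC}(b) was designed to accommodate.
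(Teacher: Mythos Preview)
Your approach is exactly what the paper does: it simply cites \cite[Thm.\,2]{knopf} and remarks (in the paragraph preceding the proposition) that the proof there, which was written for $\VV=\WW\cap L^2(0,T;H^1)$, carries over verbatim to $\WW$ because the $L^2(0,T;H^1)$-condition was never used. Your sketch fleshes out that reference faithfully, so there is nothing to add.
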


\bigskip

With this knowledge it is possible to define an operator that maps any admissible field onto its corresponding solution. The most important attributes of this field-to-state operator have also already been established in \cite{knopf}. We will summarize them in the following proposition:

\begin{prop}
	\label{FSO}
	For any field $B\in\BB$, let $f_B$ denote its corresponding strong solution of the state equation \eqref{VPC}. The operator 
	\begin{align*}
		\F:\BB \to C\big([0,T];L^2(\RR^6)\big), \quad \F(B):=f_B
	\end{align*}
	is called the \textbf{field-to-state operator}. It has the following essential properties:
	\begin{enumerate}
		\item[\textnormal{(a)}]	The field-to-state operator is \textbf{Lipschitz/Hölder continuous} in the following sense: There exists constants $C_L, C_H>0$ depending only on $\mathring f$, $T$, $K$ and $\beta$ such that for all $B,\tB\in\BB$,
		\begin{gather*} 
		\|\F(B) - \F(\tB)\|_{C([0,T];C_b)} \le C_L\, \|B-\tB\|_\WW, \\
		\|\F(B) - \F(\tB)\|_{W^{1,2}(0,T;C_b)} + \|\F(B) - \F(\tB)\|_{C([0,T];C^1_b)} \le C_H\, \|B-\tB\|_\WW^\gamma
		\end{gather*}
		where $\gamma$ is the constant from Proposition \ref{SAC}.
		\item[\textnormal{(b)}]	The field-to-state operator has the following property: Let $(B_k)\subset \BB$ be any sequence with $B_k \wto B \in\BB$ if $k\to\infty$. Then there is a subsequence $(B_{k_j})$ of $(B_k)$ such that
		$$ \F(B_{k_j}) \wto \F(B) \quad \text{in} \quad W^{1,2}\big(0,T;L^2(\RR^6)\big) $$ for $j\to\infty$. Since $\BB$ is weakly compact, any sequence $(B_k)\subset \BB$ actually has a subsequence that converges weakly to some limit $B\in\BB$. Therefore, the weak closure in $W^{1,2}\big(0,T;L^2(\RR^6)\big)$ of the image $\F(\BB)$ is weakly (sequentially) compact. This means that $\F$ is a \textbf{weakly compact} operator.
		\item[\textnormal{(c)}]	The field-to-state operator is \textbf{Fréchet differentiable} on $\BB$. For any field $B\in\BB$ and any direction $H\in\WW$, the Fréchet derivative $\F'(B)[H]=f'_B[H]$ is the unique strong solution of the initial value problem
		\begin{align}
			\label{FDEQ}
			\delt f + v\cdot\delx f - \delx\psi_{f_B}\cdot\delv f - \delx\psi_f\cdot\delv f_B + (v\stimes B)\cdot\delv f + (v\stimes H)\cdot\delv f_B = 0, \quad f\big\vert_{t=0} = 0.
		\end{align}
		This means that $f'_B[H]$ lies in $L^\infty(]0,T[\times\RR) \cap H^1(]0,T[\times\RR) \subset C([0,T];L^2)$, satisfies \eqref{FDEQ} almost everywhere and there exists some radius $\varrho>0$ depending only on $T,K,\mathring f$ and $\beta$ such that $\supp f'_B[H](t) \subset B_\varrho(0)$ for all $t\in[0,T]$.\pskip
		
		Moreover, the Fréchet derivative $\F'(B)[H]$ depends Hölder-continuously on $B\in\BB$ in the following sense: There exists some constant $C>0$ depending only on $\mathring f$, $T$, $K$ and $\beta$ such that for all $B,\tB\in\BB$ and $H\in\VV$,
		$$ \underset{\|H\|_\WW\le 1}{\sup}\; \|\F'(B)[H] - \F'(\tB)[H]\|_{L^2(0,T;L^2)} \le C\, \|B-\tB\|_\WW^\gamma. $$
		where $\gamma$ is the constant from Proposition \ref{SAC}.
	\end{enumerate}
\end{prop}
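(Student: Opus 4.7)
The plan is to reduce each of the three claims to stability estimates along the characteristic flow of the Vlasov equation \eqref{VPC}, following the strategy developed in \cite{knopf}.

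For part (a), I start from two admissible fields $B,\tB\in\BB$ with corresponding strong solutions $f_B,f_\tB$ and their characteristic flows in $\RR^6$. Subtracting the two characteristic systems and using the velocity support bound from Proposition \ref{GWS}(d) yields an integral inequality
$$|X(t)-\tilde X(t)|+|V(t)-\tilde V(t)| \le C\int_0^t \bigl(\|\delx\psi_{f_B}(s)-\delx\psi_{f_\tB}(s)\|_\infty + \|B(s)-\tB(s)\|_\infty\bigr)\ds.$$
Since the densities $\rho_{f_B},\rho_{f_\tB}$ have compact support, $\delx\psi_{f_B}-\delx\psi_{f_\tB}$ is controlled by $\|f_B-f_\tB\|_{C_b}$ via standard convolution estimates, so a Gronwall argument produces $\|\F(B)-\F(\tB)\|_{C([0,T];C_b)}\le C_L\|B-\tB\|_\WW$. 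The stronger $C([0,T];C^1_b)$ and $W^{1,2}(0,T;C_b)$ estimates require differentiating the flow with respect to its initial data; the Hölder exponent $\gamma=1-3/\beta$ then appears because this is the best regularity transmitted from $B$ to the Jacobian through the embedding $\BB\hookrightarrow L^2(0,T;C^{1,\gamma})$ of Proposition \ref{SAC}(b).

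For part (b), I take a sequence $B_k\wto B$ in $\WW$ and use the uniform bounds from Proposition \ref{GWS}(a) to extract a subsequence with $\F(B_{k_j})\wto\tf$ in $W^{1,2}(0,T;L^2(\RR^6))$. Uniform compactness of the supports together with the bound in $L^\infty(0,T;W^{2,\beta})$ permits an Aubin-Lions argument, yielding strong convergence of $f_{B_{k_j}}$ in $C([0,T];L^p)$ and, by convolution, strong convergence of $\delx\psi_{f_{B_{k_j}}}$. The electric nonlinearity $\delx\psi_f\cdot\delv f$ then passes to the limit as a product of strong times weak; the new bilinear term $(v\times B_{k_j})\cdot\delv f_{B_{k_j}}$ is weak times strong and passes as well. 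Identifying the initial condition $\F(B_{k_j})(0)=\mathring f$ and invoking uniqueness from Proposition \ref{GWS} yields $\tf=f_B$, as required.

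For part (c), I first establish well-posedness of the linearized problem \eqref{FDEQ} by a Banach fixed-point iteration in $L^2(0,T;L^2)$, exploiting the regularity of $f_B$ from Proposition \ref{GWS}(a); a parallel characteristic argument shows that $f'_B[H]$ has compact support, giving the claim about $\varrho$. To verify that this solution is indeed the Fréchet derivative, I set $r_H:=\F(B+H)-\F(B)-\F'(B)[H]$, derive the transport equation it satisfies, and observe that its source terms are at least bilinear in $(\F(B+H)-\F(B),H)$; inserting the Lipschitz bound from part (a) and applying Gronwall then gives $\|r_H\|_{C([0,T];L^2)}=o(\|H\|_\WW)$. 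The Hölder dependence of $\F'(\cdot)[H]$ on $B$ is obtained by subtracting the linearizations around $B$ and $\tB$ and estimating the difference of sources through part (a), which directly produces the exponent $\gamma$. The main obstacle throughout is the $C([0,T];C^1_b)$-estimate in part (a): controlling the difference of the Jacobians of the two characteristic flows relies crucially on the $C^{1,\gamma}$-embedding of admissible fields, and it is this estimate that propagates the exponent $\gamma$ into both the compactness argument of (b) and the derivative estimate of (c).
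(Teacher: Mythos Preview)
The paper does not give a self-contained proof of this proposition: it simply states that these properties ``have also already been established in \cite{knopf}'' and summarizes them, remarking afterwards that the proofs in \cite{knopf} carry over verbatim when the space $\VV=\WW\cap L^2(0,T;H^1)$ used there is replaced by $\WW$. Your sketch is therefore more detailed than what the paper itself provides, but it correctly outlines the characteristic-flow/Gronwall argument for (a), the weak-compactness/limit-identification argument for (b), and the linearization/remainder argument for (c) that underlie the results of \cite{knopf}; in particular your identification of the $C^{1,\gamma}$-embedding as the source of the H\"older exponent $\gamma$ is exactly the mechanism used there.
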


In \cite{knopf} these properties could finally be used to analyze an optimal control problem with a tracking type cost functional where the field $B$ was the control itself. 


\section{The set of admissible controls and the control-to-state operator}

As motivated in the introduction, we assume that our magnetic field $B$ is induced by $N$ field coils. We assume that each coil generates a magnetic field of a certain shape $m_i=m_i(x)$ and its intensity at time $t$ is determined by a factor $u_i(t)$. This means that the magnetic field of the $i$-th coil is given by $B_i(t,x)=u_i(t)\, m_i(x)$ and thus
\begin{align*}
B(u)(t,x) = \sum_{i=1}^N u_i(t)\, m_i(x).
\end{align*}
is the total external magnetic field. 
The intensity factor $u_i$ is directly proportional to the current that flows through the $i$-th coil. 
Now the vector $(u_1,...,u_N)^T$ is the control in this model and the magnetic field can be interpreted as the function value of the operator $u\mapsto B(u)$. We suppose that $m_i\in W^{2,\beta}(\RR^3;\RR^3)$ for every index $i\in\{1,...,N\}$ and, since real magnetic fields are always source-free, we also assume that $\divergence_x m_i = 0$ on $\RR^3$. The control $u$ is supposed to lie in $L^2([0,T];\RR^N)$ in order to ensure that the field $B(u)$ has the desired regularity. This is specified by the following definition:

\begin{defn}
	Let $N$ be a fixed positive integer and let $M>0$ be a real number. 
	For every $i\in\{1,\,...\,, N\}$ let $m_i=(m_{i1},m_{i2},m_{i3})^T$ be a fixed vector-valued function in $W^{2,\beta}(\RR^3;\RR^3)\subset C^{1,\gamma}(\RR^3;\RR^3)$ with $\|m_i\|_{W^{2,\beta}}\le M$ and $\divergence\, m_i = 0$ on $\RR^3$.
	Moreover let $a=(a_1,...,a_N)^T$ and $b=(b_1,...,b_N)^T$ be fixed functions in $L^2([0,T];\RR^N)$ with $a_i\le 0 \le b_i$ almost everywhere on $[0,T]$ for all ${i\in\{1,...,N\}}$. We define
	\begin{align*}
	&\UU_i:=\Big\{ w \in L^2\big([0,T]\big) \;\Big|\; a_i \le w \le b_i \text{ a.e. on }[0,T]  \Big\},\quad i=1,...,N \tand
	\quad \UU:=\UU_1 \times \;...\; \times \UU_N\;.
	\end{align*}
	The set $\UU$ is referred to as the \textbf{set of admissible controls}. Moreover we define the operator\vspace{-2mm}
	\begin{align*}
	B(\cdot):L^2([0,T];\RR^N) \to L^2\big(0,T;W^{2,\beta}(\RR^3;\RR^3)\big), \; u\mapsto B(u) 
	\quad\text{where}\quad
	B(u)(t,x):=\sum_{i=1}^N u_i(t)\, m_i(x)\;.
	\end{align*}
	The operator $B(\cdot)$  is referred to as the \textbf{control-to-field operator} and 
	\begin{align*}
		\S:\UU\to C\big([0,T];L^2(\RR^6)\big), u\mapsto \S(u):=\F(B(u))
	\end{align*}
	is called the \textbf{control-to-state operator}.
\end{defn}

Note that the set of admissible controls is not empty as the zero function lies in $\UU$. This definition does only make sense if the fields that are generated by the control-to-field operator are admissible in the sense of Proposition~\ref{SAC}. In this case the state $\S(u)=f_{B(u)}$ is well-defined but we also have to investigate how it depends on the control $u$: 
\begin{prop}
	\label{BOP}
	$\;$
	\begin{itemize}
		\item[\textnormal{(a)}] The set $\UU$ is a bounded, convex and closed subset of $L^2([0,T];\RR^N)$ and thus it is relatively weakly sequentially compact. 
		\item[\textnormal{(b)}] The operator $B(\cdot)$ is linear and continuous and there exists some constant $K>0$ depending only on $N,a,b$ and $M$ such that $B(\UU)\subset \IBBH\subset \BB$. This means that the control-to-field operator maps admissible controls onto admissible fields. 
		\item[\textnormal{(c)}] The control-to-field operator $B(\cdot)$ is continuously Fréchet differentiable and its Fréchet derivative at the point $u\in\DD$ is given by
		\begin{align*}
		B'(u)[h] = B(h) \quad\text{for all}\quad h\in L^2([0,T];\RR^N)\,.
		\end{align*}
		\item[\textnormal{(d)}] The control-to-state operator $\S$ is Fréchet differentiable on $\UU$ and its Fréchet derivative at the point $u\in\UU$ is given by
		\begin{align*}
			\S'(u)[h]=\frac{\mathrm d \F\big(B(u)\big)}{\mathrm du}[h] = \F'\big(B(u)\big)[B(h)] \quad\text{for all}\quad h\in \DD\,.
		\end{align*}
		Recall that $\F'\big(B(u)\big)[B(h)]$ is determined by the initial value problem \eqref{FDEQ}. The Fréchet derivative depends Hölder-continuously on $u$, i.e., there exists some constant $C>0$ depending only on $\mathring f, T, K$ and $\beta$ such that 
		\begin{align*}
		\|\S'(u_1) - \S'(u_2)\|_{L^2(0,T;L^2)} \le C\; \|u_1-u_2\|_{L^2([0,T];\RR^N)}^\gamma
		\end{align*}
		for all $u_1,u_2\in\UU$ and $h\in \DD$ where $\gamma$ is the constant from Proposition \ref{SAC}. 
	\end{itemize}
\end{prop}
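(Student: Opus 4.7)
The plan is to prove the four items in order, treating (a) and (b) as direct consequences of the pointwise box constraints and the fixed profiles $m_i$, and reducing (c) and (d) to the chain rule applied to $\S=\F\circ B(\cdot)$ combined with Proposition~\ref{FSO}(c). None of the steps requires a new idea; the argument is essentially a careful unpacking of definitions.

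For part (a), boundedness of $\UU$ in $L^2([0,T];\RR^N)$ follows from $a_i\le u_i\le b_i$ together with $a,b\in L^2$, convexity is inherited from the pointwise convex constraints, and closedness is obtained by extracting an a.e.\ convergent subsequence of any $L^2$-Cauchy sequence in $\UU$ and passing to the limit in the pointwise bounds. Since $L^2([0,T];\RR^N)$ is reflexive, relative weak sequential compactness then follows from boundedness. For part (b), linearity of $B(\cdot)$ is immediate from the definition, and continuity reduces, via the triangle inequality and Cauchy--Schwarz, to
\[
\|B(u)\|_\WW \;\le\; \sum_{i=1}^N \|u_i\|_{L^2([0,T])}\,\|m_i\|_{W^{2,\beta}} \;\le\; M\sqrt N\,\|u\|_{L^2([0,T];\RR^N)}.
\]
Combined with the boundedness of $\UU$ from (a), this lets one pick $K$ depending only on $N,M,a,b$ so that $B(\UU)\subset\IBBH\subset\BB$.

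Part (c) is immediate: since $B(\cdot)$ is linear and bounded, the Fréchet remainder $B(u+h)-B(u)-B(h)$ vanishes identically, so $B'(u)=B(\cdot)$ at every $u\in\DD$; in particular $B'$ is constant in $u$ and hence trivially continuous. For part (d), $B(\UU)\subset\IBBH$ lies in the interior of $\BB$, so Proposition~\ref{FSO}(c) applies and the usual chain rule gives
\[
\S'(u)[h]=\F'(B(u))[B'(u)[h]]=\F'(B(u))[B(h)],
\]
which, again by Proposition~\ref{FSO}(c), is the unique strong solution of \eqref{FDEQ} with $B$ replaced by $B(u)$ and $H$ replaced by $B(h)$. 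For the Hölder estimate of $\S'$, I use linearity of $\F'(B)$ to rewrite Proposition~\ref{FSO}(c) as
\[
\|(\F'(B_1)-\F'(B_2))[\tilde H]\|_{L^2(0,T;L^2)} \le C\,\|\tilde H\|_\WW\,\|B_1-B_2\|_\WW^\gamma,
\]
specialise to $B_j=B(u_j)$ and $\tilde H=B(h)$, and bound $\|B(h)\|_\WW$ and $\|B(u_1)-B(u_2)\|_\WW$ by $M\sqrt N$ times the respective $L^2([0,T];\RR^N)$ norms; taking the supremum over $\|h\|_\DD\le 1$ gives the claim with the final constant absorbing a factor $(M\sqrt N)^{1+\gamma}$. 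The only real subtlety is this last rescaling step, which must carefully distinguish the \emph{linear} factor coming from $B(h)$ from the \emph{Hölder} factor coming from $B(u_1)-B(u_2)$; everything else is bookkeeping.
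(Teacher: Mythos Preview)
Your proposal is correct and follows essentially the same route as the paper: both arguments deduce (a) from the pointwise box structure, obtain (b) via the estimate $\|B(u)\|_\WW\le M\sqrt N\,\|u\|_{L^2([0,T];\RR^N)}$ and then choose $K$ from the $L^2$-bounds on $a,b$, and derive (c)--(d) from linearity of $B(\cdot)$ together with the chain rule and Proposition~\ref{FSO}(c). Your treatment of the H\"older continuity of $\S'$ is in fact more explicit than the paper's, which simply cites (b), (c), Proposition~\ref{FSO}(c) and the chain rule.
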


\noindent Sometimes, for $u\in\UU$ and $h\in \DD$, we will also write $f_u := f_{B(u)}$ to denote $\S(u)$ and $f'_u[h]$ to denote the Fréchet derivative $\S'(u)[h]$.

\begin{proof}
	For any $i\in\{1,...,N\}$ the set $\UU_i\subset L^2([0,T])$ is evidently bounded, convex and closed. Thus weak compactness follows directly from the theorems of Banach-Alaoglu and Mazur. The same holds for the set $\UU\subset \DD$ which proves (a). The operator $B(\cdot)$ is obviously linear and for all $u\in L^2([0,T];\RR^N)$,
	\begin{align*}
	\|B(u)\|_{\WW} &\le \sum_{i=1}^N  \|u_i\|_{L^2([0,T])}  \, \|m_{i}\|_{W^{2,\beta}} \le M\; \sum_{i=1}^N \|u_i\|_{L^2([0,T])}\\
	&\le  M\sqrt{N} \left(\sum_{i=1}^N \|u_i\|_{L^2([0,T])}^2 \right)^{1/2} = M\sqrt{N}\; \|u\|_{L^2([0,T];\RR^N)}\,.
	\end{align*}
	Hence $B(\cdot)$ is continuous. Moreover this yields
	\begin{align*}
	\|B(u)\|_{L^2(0,T;W^{2,\beta})} < M\sqrt{N}\, \big( \|a\|_{L^2([0,T];\RR^N)} + \|b\|_{L^2([0,T];\RR^N)} \big) =:\frac K 2, \quad u\in\UU
	\end{align*}
	and thus $B(\UU)\subset\IBBH$. This proves (b) which directly implies (c). Finally (d) follows directly from (b), (c), Proposition \ref{FSO}(c) and the chain rule.
\end{proof}

\section{The optimal control problem}

We will now suppose that $\lambda_i \ge 0$ for all $i\in\{1,...,N\}$ and $\mathring f, f_d\in C^2_c(\RR^6)$ such that $\|\mathring f\|_{p} = \|f_d\|_{p}$ for all $p\in [1,\infty]$. The aim is to find a control $u\in\UU$ such that the distribution function $f$ matches the desired distribution function $f_d$ as closely as possible. This is modeled by the following minimization problem:
\begin{align}
\begin{aligned}
\text{Minimize}\quad  	&I(f,u) = \frac 1 2 \|f(T)-f_d\|_{L^2(\RR^6)}^2 + \sum_{i=1}^N \frac{\lambda_i}{2}  \|u_i\|_{L^2([0,T])}^2\\
\text{s.t.}\quad 		&\bullet\quad u\;\text{is an admissible control, i.e.,}\;u\in\UU \\
&\bullet\quad f \text{ is a strong solution of the Vlasov-Poisson system} \\
&\qquad \delt f + v\cdot \delx f - \delx\psi_f \cdot\delv f + \big(v\times B(u)\big) \cdot \delv f = 0, \quad f\big\vert_{t=0} = \mathring f  \\
&\qquad\text{to the field}\; B(u),\;\text{i.e.},\; f=\S(u).
\end{aligned}
\end{align}
Recall that $B(u)$ always lies in $\IBBH\subset\BB$ if the control $u$ is admissible. This means that the term \glqq strong solution to the field $B(u)$\grqq\, is well defined. Using the control-to-state operator, this problem can be reduced to
\begin{align}
\label{OP2}
\begin{aligned}
\text{Minimize}\;  	&J(u) = \frac 1 2 \|\S_T(u)-f_d\|_{L^2(\RR^6)}^2 + \sum_{i=1}^N \frac{\lambda_i}{2}  \|u_i\|_{L^2([0,T])}^2, \; 
\text{s.t.}\;		u\in\UU.
\end{aligned}
\end{align}
where $\S_T(u)=f_u(T)$ denotes the control-to-state operator evaluated at time $T$.

\subsection{Existence of a globally optimal solution}

Of course, this optimal control problem does only make sense if it has at least one solution. This is established by the following Theorem:
\begin{thm}
	\label{NOC2}
	\hypertarget{HNOC2}
	The optimization problem \eqref{OP2} possesses a \textbf{globally optimal solution} $\u$, i.e., for all $u\in\UU$, $J(\u)\le J(u)$. If $\lambda_i>0$ for some index $i\in\{1,...,N\}$, it holds that 
	\begin{align*}
	\|\u_i\|_{L^2([0,T])} \le \frac 2 {\sqrt{\lambda_i}}\; \|\mathring f\|_{L^2(\RR^6)}.
	\end{align*}
\end{thm}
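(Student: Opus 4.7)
The plan is to apply the direct method of the calculus of variations. Since $J\ge0$ is bounded below, I would fix a minimizing sequence $(u_k)\subset\UU$ with $J(u_k)\to m:=\inf_{u\in\UU} J(u)$. By Proposition \ref{BOP}(a), $\UU$ is weakly sequentially compact in $\DD$, so, after passing to a subsequence (not relabeled), $u_k\wto\u$ in $\DD$ for some $\u\in\UU$. The continuity and linearity of $B(\cdot)$ (Proposition \ref{BOP}(b)) transport this weak convergence to $B(u_k)\wto B(\u)$ in $\WW$, and Proposition \ref{FSO}(b) yields, upon passing to a further subsequence,
\[ \S(u_k)=\F(B(u_k)) \wto \F(B(\u))=\S(\u) \quad\text{in}\quad W^{1,2}\bigl(0,T;L^2(\RR^6)\bigr). \]

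Next, I would descend to the endpoint $t=T$. The embedding $W^{1,2}(0,T;L^2)\hookrightarrow C([0,T];L^2)$ is continuous, hence the evaluation $f\mapsto f(T)$ is a bounded linear map from $W^{1,2}(0,T;L^2)$ into $L^2(\RR^6)$ and therefore weakly continuous. This gives $\S_T(u_k)\wto\S_T(\u)$ in $L^2(\RR^6)$. Weak lower semicontinuity of the $L^2$-norm together with the weak lower semicontinuity of each convex continuous functional $u_i\mapsto\|u_i\|_{L^2([0,T])}^2$ then yields
\[ J(\u)\le\liminf_{k\to\infty} J(u_k)=m, \]
and since $\u\in\UU$, equality must hold, so $\u$ is a global minimizer.

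For the a priori bound when $\lambda_i>0$, I would test optimality against the admissible control $u\equiv 0$. The corresponding state $f_0=\S(0)$ satisfies $\|f_0(T)\|_{L^2}=\|\mathring f\|_{L^2}=\|f_d\|_{L^2}$ by the conservation property in Proposition \ref{GWS}(a) and the normalization hypothesis on $\mathring f$ and $f_d$. Hence $J(0)\le\tfrac12(\|f_0(T)\|_{L^2}+\|f_d\|_{L^2})^2=2\|\mathring f\|_{L^2}^2$, and optimality of $\u$ gives
\[ \tfrac{\lambda_i}{2}\|\u_i\|_{L^2([0,T])}^2 \le J(\u)\le J(0)\le 2\|\mathring f\|_{L^2(\RR^6)}^2, \]
from which the claimed estimate follows after dividing by $\lambda_i/2$ and taking square roots.

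The only nontrivial step is the transfer of convergence to the terminal time: Proposition \ref{FSO}(b) provides only weak convergence in a Bochner space on $[0,T]$, and a direct use of it would not control the value at $t=T$. The trick is to exploit the $W^{1,2}$-time regularity of the states so that evaluation at $T$ becomes a continuous linear functional, turning weak convergence of trajectories into weak convergence of their endpoints in $L^2(\RR^6)$. Everything else is standard direct-method bookkeeping.
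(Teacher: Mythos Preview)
Your proof is correct and follows essentially the same route as the paper: direct method with a minimizing sequence, weak compactness of $\UU$, transfer of weak convergence through $B(\cdot)$ and Proposition~\ref{FSO}(b), passage to the endpoint $t=T$, and weak lower semicontinuity; the a priori bound is likewise obtained by comparison with the zero control using $\|f_0(T)\|_{L^2}=\|\mathring f\|_{L^2}=\|f_d\|_{L^2}$. The only cosmetic differences are that the paper phrases the endpoint step via the fundamental theorem of calculus (equivalent to your embedding argument) and derives the bound on $\|\u_i\|_{L^2}$ by contradiction rather than directly.
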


\begin{proof} $J$ is bounded from below since $J(u)\ge 0$ for all $u\in\UU$. Hence the infimum $M:={\inf}_{u\in\UU} J(u)$ exists and there also exists a minimizing sequence $(u_k)_{k\in\NN}\subset \UU$ such that $J(u_k) \to M$ if $k\to\infty$. As $\UU$ is weakly compact we obtain $u_k\rightharpoonup \u$ in $L^2([0,T];\RR^N)$ for some weak limit $\u\in\UU$ after extraction of a subsequence. This especially means that $[u_k]_i\rightharpoonup \u_i$ in $L^2([0,T])$ for every $i\in\{1,...,N\}$ and obviously also $B(u_k)\wto B(\u)\in\BB$ in $L^2(0,T;W^{2,\beta})$. Then, we can conclude from Proposition \ref{FSO}(c) that $f_{u_k} \wto f_\u$ in $W^{1,2}(0,T;L^2)$ up to a subsequence. By the fundamental theorem of calculus this directly implies that $f_{u_k}(T) \wto f_\u(T)$ in $L^2(\RR^6)$. Hence we can deduce from the weak lower semicontinuity of the $L^2$-norm that \vspace{-2mm}
	\begin{align*}
	J(\u) & = \frac 1 2 \|f_\u(T)-f_d\|_{L^2}^2 + \sum_{i=1}^N \frac {\lambda_i} 2 \|\u_i\|_{L^2}^2
	\le \underset{k\to\infty}{\lim\inf}\left[ \frac 1 2 \|f_{u_k}(T)-f_d\|_{L^2}^2 \right] +  \sum_{i=1}^N \frac{\lambda_i}{2}\, \underset{k\to\infty}{\lim\inf} \big\|[u_k]_i \big\|_{L^2}^2\\
	& \le \underset{k\to\infty}{\liminf}\left[ \frac 1 2 \|f_{u_k}(T)-f_d\|_{L^2}^2 +  \sum_{i=1}^N \frac {\lambda_i} 2 \big\| [u_k]_i\big\|_{L^2}^2 \right]
	= \underset{k\to\infty}{\lim}\; J(u_k) = M.
	\end{align*}
	By the definition of infimum this yields $J(\u) = M$. Now suppose that there exists some $i\in\{1,...,N\}$ such that $\lambda_i>0$ and $\|\u_i\|_{L^2([0,T])} > (2/\sqrt{\lambda_i}) \|\mathring f\|_{L^2(\RR^6)}$. Then
	\begin{align*}
	J(\u) &\ge \frac {\lambda_i} 2 \, \|\u_i\|_{L^2([0,T])}^2 > \frac 1 2 \, \big(2\,\|\mathring f\|_{L^2(\RR^6)}\big)^2 = \frac 1 2\; \big( \|f_0(T)\|_{L^2} + \|f_d\|_{L^2}\big)^2 
	\ge \frac 1 2\; \|f_0 (T)-f_d\|_{L^2}^2 = J(0)
	\end{align*}
	where $0$ denotes the null function $(0,...,0)^T \in\UU$. This, however, is a contradiction to the global optimality of $\u$ and thus the asserted inequality follows.
\end{proof}

Of course, this theorem does not provide uniqueness but only existence of a globally optimal solution. Since the control-to-state operator $u\mapsto f_u$ is nonlinear we cannot expect the cost functional $J$ to be convex. This means that the optimal control problem may also have several locally optimal solutions. In the following subsection, these locally optimal solutions will be characterized by necessary optimality conditions of first order.

\subsection{Necessary conditions for local optimality}
Since our set of admissible controls is a box-restricted subset of $L^2([0,T];\RR^N)$ this provides better possibilities to establish necessary optimality conditions compared to the model in \cite{knopf} where the magnetic field was the control itself. 
As the basic approach will be quite similar we will also have to discuss the costate equation. In this context we will need the constant $R_Z>0$ from \cite[Lem.\,6]{knopf}. It was defined in such a way that for any admissible field $B\in\BB$ the solution $Z_B=Z_B(s,t,x,v)$ of the characteristic system 
$$\dot x = v,\quad  \dot v= -\delx\psi_{f_B}(s,x) + v\times B(s,x)$$
with $Z_B(t,t,x,v)=(x,v)$ satisfies $\|Z(s,t,\cdot)\|_{L^\infty(\BR)} \le R_Z$ for all $s,t\in[0,T]$. Without loss of generality, we may assume that $R_Z\ge R$.
\begin{prop}
	\label{ADJS-u}
	\hypertarget{HADJS-u}
	Let $u\in \UU$ be arbitrary and let $f_u = \S(u)$ be its induced state that is given by the control-to-state operator. Suppose that $\chi\in C_c^2(\RR^6;[0,1])$ with $\chi=1$ on $B_{R_Z}(0)$ and 
	$\supp\chi\subset B_{2R_Z}(0)$. Then the \textbf{costate equation} 
	\begin{align}
	\label{COSTEQ2}
	\delt g + v\cdot\delx g  - \delx\psi_{f_u}\cdot\delv g + \big(v\times B(u)\big)\cdot\delv g = \Phi_{f_u,g}\ \chi, \quad g\big\vert_{t=T}=f_u(T)-f_d
	\end{align}
	where
	\begin{align*}
	\Phi_{\varphi,\gamma}(t,x):= - \iint \frac{x-y}{|x-y|^3}\cdot \delv\varphi(t,y,w)\, \gamma(t,y,w) \;\mathrm dy \mathrm dw, \quad t\in[0,T], x\in\RR^3
	\end{align*}
	has a unique strong solution $$g_u \in W^{1,2}(0,T;C_b)\cap C([0,T];C^1_b(\RR^6)) \cap L^\infty(0,T;H^2(\RR^6))$$ with $\supp g_u(t) \subset B_{R^*}(0)$, ${t\in[0,T]}$ for some constant $R^*>0$ depending only on $\mathring f,f_d,T,K$ and $\beta$. Note that $g_u\big\vert_{B_R(0)}$ does not depend on the choice of $\chi$. The operator 
	\begin{align*}
	\A:\UU\to C\big([0,T];L^2(\RR^6)\big),\; u\mapsto \A(u):=g_u
	\end{align*}
	is called the \textbf{control-to-costate operator}. It depends Lipschitz/Hölder-continuously on $u$ in such a way that there exists some constant $C\ge 0$ depending only on $\mathring f,f_d,T,K,\beta$ and $\|\chi\|_{C^1_b}$ such that
	\begin{align*}
	\|\A(u_1) - \A(u_2) \|_{C([0,T];C_b)}  &\le C\|u_1 - u_2\|_{L^2([0,T];\RR^N)},\\
	\|\A(u_1) - \A(u_2) \|_{W^{1,2}(0,T;C_b)} + \|\A(u_1) - \A(u_2) \|_{C([0,T];C^1_b)}  &\le C\|u_1 - u_2\|_{L^2([0,T];\RR^N)}^{\gamma}
	\end{align*}
	for all $u_1,u_2\in \UU$.\pskip
\end{prop}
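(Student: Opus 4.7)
The plan is to reverse time and convert the costate equation into a forward-in-time linear Vlasov-type equation with a nonlocal source term that depends linearly on the unknown, and then to solve this via a Banach fixed-point argument along the characteristic flow associated with the state $f_u$.

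First, I would set $h(\tau,x,v):=g(T-\tau,x,v)$, so that $h$ satisfies a forward problem on $[0,T]$ with initial datum $h|_{\tau=0}=f_u(T)-f_d\in C^2_c(\RR^6)$ and transport coefficients obtained from those of \eqref{COSTEQ2} by the obvious sign flips. The associated characteristic system is a time-shifted version of the characteristics considered in \cite{knopf} for the state equation, and hence inherits the same $L^\infty$-bounds; in particular the flow preserves the ball $B_{R_Z}(0)$ up to time $T$, so on the set where it matters the cutoff $\chi$ equals $1$. For any fixed $\tilde g\in C([0,T];C_b(\RR^6))$ with compact $x$-$v$-support, the right-hand side $\Phi_{f_u,\tilde g}(t,x)\,\chi(x,v)$ is bounded, compactly supported, and Lipschitz in $(x,v)$ because $f_u\in C([0,T];C^1_b)$ by Proposition~\ref{GWS}; hence the linear problem obtained by freezing $g=\tilde g$ in the source has a unique classical-along-characteristics solution $\Psi(\tilde g)$ via Duhamel's formula along the characteristic flow.

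The next step is to show that $\Psi$ is a contraction. Since $\Phi_{\varphi,\cdot}$ depends linearly on its second argument and $\delv f_u$ is bounded uniformly on $[0,T]\times\RR^6$, one has a pointwise estimate of the form $|\Phi_{f_u,\tilde g_1}(t,x)-\Phi_{f_u,\tilde g_2}(t,x)|\le C\|\tilde g_1(t)-\tilde g_2(t)\|_{C_b}$ for $x$ in a fixed ball, with $C$ depending only on $\mathring f, T, K, \beta$. Using Duhamel's formula along the characteristics and the $L^\infty$-estimate for $\chi$, this yields $\|\Psi(\tilde g_1)-\Psi(\tilde g_2)\|_{C([0,T_\ast];C_b)}\le CT_\ast\|\tilde g_1-\tilde g_2\|_{C([0,T_\ast];C_b)}$ on a sufficiently short interval $[0,T_\ast]$, giving a unique fixed point there; equivalently one can work directly on $[0,T]$ with the exponentially weighted norm $\sup_{t}e^{-\lambda t}\|\cdot(t)\|_{C_b}$ for $\lambda$ large. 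Iterating (or rescaling) produces the unique strong solution $g_u$ on $[0,T]$ with the claimed compact support property; here the support bound follows because the characteristic flow keeps the support of the evolving source inside $B_{R_Z}(0)$, while the cutoff $\chi$ prevents growth of the support beyond $B_{2R_Z}(0)$, so one can take $R^*:=2R_Z$.

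The regularity statements are then obtained exactly as for $f_u$ in Proposition~\ref{GWS}. Continuous differentiability in $(x,v)$ and the $W^{1,2}(0,T;C_b)$-bound come from differentiating the Duhamel representation along characteristics and using the $C^1_b$-regularity of the transport field (which, thanks to Proposition~\ref{SAC}(b), belongs to $C^{1,\gamma}$ uniformly in $t$) and of the source; the $L^\infty(0,T;H^2)$-bound follows by commuting $\delx^\alpha,\delv^\alpha$ with the transport operator, using that $f_u\in L^\infty(0,T;W^{2,\beta})$ and $\chi\in C^2_c$, and applying Gronwall to the resulting $L^2$-estimate. The independence of $g_u|_{B_R(0)}$ of the choice of $\chi$ is immediate: for any two admissible cutoffs $\chi_1,\chi_2$, their difference vanishes on $B_{R_Z}(0)$, and since the backward characteristics starting in $B_R(0)\subset B_{R_Z}(0)$ stay in $B_{R_Z}(0)$, the resulting solutions agree on $B_R(0)$.

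Finally, for the Lipschitz/Hölder dependence on $u$, I would form the difference $g_{u_1}-g_{u_2}$, write it as the sum of a drift-perturbation term (depending on $B(u_1)-B(u_2)$, $\delx\psi_{f_{u_1}}-\delx\psi_{f_{u_2}}$, etc.) and a source-perturbation term (depending on $\Phi_{f_{u_1},g_{u_1}}-\Phi_{f_{u_2},g_{u_2}}$), and estimate each along characteristics using the Lipschitz and Hölder estimates for $\F$ from Proposition~\ref{FSO}(a) together with the linear dependence of $\Phi$ on its arguments. The main obstacle is the same as in the field-to-state analysis of \cite{knopf}: one only obtains $C_b$-Lipschitz continuity with Lipschitz exponent $1$, whereas the estimates in $W^{1,2}(0,T;C_b)$ and $C([0,T];C^1_b)$ force one to differentiate the characteristic flow, which in turn relies on the $C^{1,\gamma}$-regularity of $B(u)$ and hence produces only the Hölder exponent $\gamma=1-3/\beta$. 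A concluding Gronwall argument then yields the two asserted continuity estimates.
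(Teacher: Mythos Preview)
Your argument is essentially a reconstruction of \cite[Thm.\,5]{knopf} from scratch, and as such it is correct in outline. The paper, however, does not redo any of this analysis: it simply observes that $u\in\UU$ implies $B(u)\in\BB$, invokes \cite[Thm.\,5]{knopf} directly to obtain existence, uniqueness, regularity and the Lipschitz/H\"older dependence of the costate on the \emph{field} $B$, and then composes with the linear bounded map $u\mapsto B(u)$ from Proposition~\ref{BOP}(b) to transfer those estimates to dependence on $u$. So the paper's proof is two lines, while yours reproves the cited theorem.

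What each approach buys: the paper's route is the natural one given that the companion paper \cite{knopf} has already done all the hard work at the level of admissible fields; your route is self-contained and makes the mechanism (time reversal, Duhamel along characteristics, contraction, Gronwall) explicit, which is valuable if one does not want to chase the reference. One small inaccuracy in your sketch: the choice $R^*=2R_Z$ is too optimistic. The final datum $f_u(T)-f_d$ need not be supported in $B_R(0)$ (the support of $f_d$ is fixed but unrelated to $R$), and characteristics emanating from $\supp f_d$ or from $\supp\chi\subset B_{2R_Z}(0)$ are not a~priori confined to $B_{2R_Z}(0)$; one only gets a finite $R^*$ depending on $\mathring f, f_d, T, K, \beta$ from the uniform bound on the characteristic flow over compact sets, not the explicit value $2R_Z$. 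This does not affect the rest of your argument.
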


\smallskip

\begin{proof}
	Since $u\in\UU$ and thus $B(u)\in\BB$, this result follows directly from \cite[Thm.\,5]{knopf} and the estimate
	$$\|B(u_1)-B(u_2)\|_{\WW} \le C\|u_1 - u_2\|_{L^2([0,T];\RR^N)},\quad u_1,u_2\in\DD $$
	that is a direct consequence of Lemma \ref{BOP}(b).
\end{proof}

Of course the costate equation \eqref{COSTEQ2} does not appear out of thin air. In the proof of Theorem \ref{MIN}, this equation will be derived by Lagrangian technique. Using the costate, various equivalent necessary conditions for local optimality can be established. In the following, the most important ones are presented:

\begin{thm}
	\label{MIN}
	\hypertarget{HMIN}
	Suppose that $\lambda_i >0$ for every $i\in\{1,...,N\}$ and let $\u\in\UU$ be any function. According to the definition of the control-to-state operator $f_\u$ denotes the unique strong solution of the state equation to the field $B(\u)\in\BB$. Moreover let $g_\u$ denote the unique strong solution of the costate equation \eqref{COSTEQ2}. We define the function $p(\u): [0,T] \to \RR^N$ by
	$p(\u)=(p_1(\u),...,p_N(\u))^T$ with
	$$p_i(\u)(t) := \int \big(v\times m_i(x)\big)\cdot\delv f_\u(t,x,v)\; g_\u(t,x,v)\dxv,\quad i=1,...,N\;.$$
	For every $\u\in\UU$, $p(\u)\in C([0,T];\RR^N)$.\pskip
	
	\pagebreak[1]
	\noindent The following items are equivalent:
	\begin{itemize}
		\item [\textnormal{\bfseries(NC1)}] $\u$ satisfies the \textbf{variational inequality}, i.e., for all $u=(u_1,...,u_N)\in\UU$,
		\begin{align*}
		\int\limits_0^T \big({\lambda_i}\u_i -p_i(\u)\big)\; (u_i-\u_i) \dt \ge 0,\quad i=1,...,N.
		\end{align*}
		\item [\textnormal{\bfseries(NC2)}] $\u$ is given implicitely by the \textbf{projection formula}, i.e., for almost all $t\in[0,T]$ and any $i\in\{1,...,N\}$,
		\begin{align*}
		\u_i(t) = \PP_{[a_i(t),b_i(t)]}\left(\frac 1 {{\lambda_i}} \, p_i(\u)(t)  \right)
		\end{align*}
		where $\PP_{[a,b]}$ denotes a projection of $\RR$ onto the interval $[a,b]$ that is given by $$\PP_{[a,b]}(\xi) = \min\big\{\max\{\xi,a\} , b\big\}, \quad \xi\in\RR.$$
	\end{itemize}
	Now suppose that $\u$ is a \textbf{locally optimal solution} of the optimization problem \eqref{OP2}, i.e., there exists $\delta>0$ such that $J(\u)\le J(u)$ for every $u\in\UU$ with ${\|\u-u\|_{L^2}<\delta}$. Then $\u$ satisfies the assertions \textnormal{(NC1)} and \textnormal{(NC2)}. This means that these items are (equivalent) necessary conditions for local optimality. \pagebreak[2]
\end{thm}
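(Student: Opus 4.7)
I would split the proof into three pieces: (i) reducing everything to the representation
\begin{align*}
J'(\u)[h] \;=\; \sum_{i=1}^N \int_0^T \bigl(\lambda_i \u_i(t) - p_i(\u)(t)\bigr)\, h_i(t)\dt, \qquad h\in\DD,
\end{align*}
via a Lagrangian/adjoint calculation; (ii) deducing (NC1) from local optimality together with convexity of $\UU$; and (iii) establishing the equivalence (NC1)$\Leftrightarrow$(NC2) by the standard pointwise characterization of the metric projection onto $[a_i(t),b_i(t)]$. The preliminary continuity claim $p(\u)\in C([0,T];\RR^N)$ follows directly from Propositions \ref{GWS}, \ref{ADJS-u}, and \ref{SAC}(b): the integrand $(v\times m_i(x))\cdot\delv f_\u(t,x,v)\, g_\u(t,x,v)$ is continuous in $t$ with uniform compact support in $(x,v)$, so dominated convergence suffices.

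Step (i) is the main obstacle. I would start from the chain-rule identity
\begin{align*}
J'(\u)[h] = \bigl\langle f_\u(T) - f_d,\, \F'(B(\u))[B(h)](T) \bigr\rangle_{L^2(\RR^6)} + \sum_{i=1}^N \lambda_i \int_0^T \u_i h_i \dt
\end{align*}
given by Proposition \ref{BOP}(d), and rewrite the first term by multiplying the linearized Vlasov equation \eqref{FDEQ} for $f'_\u[h]:=\F'(B(\u))[B(h)]$ by $g_\u$ and integrating over $[0,T]\times\RR^6$. Integration by parts in $t$ uses $f'_\u[h]\big\vert_{t=0}=0$ and $g_\u\big\vert_{t=T}=f_\u(T)-f_d$ to produce precisely the boundary term $\bigl\langle f_\u(T)-f_d,\, f'_\u[h](T)\bigr\rangle_{L^2}$. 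Integration by parts in $(x,v)$ is legitimate because both $f'_\u[h](t)$ and $g_\u(t)$ have compact support and the fields $\delx\psi_{f_\u}$ and $v\times B(\u)$ are divergence-free in $(x,v)$. The transport operator acting on $g_\u$ becomes exactly the left-hand side of the costate equation \eqref{COSTEQ2}; since $\supp f'_\u[h](t)\subset B_\varrho(0)\subset B_{R_Z}(0)$, the cut-off $\chi$ equals one on this support, so only the $\Phi_{f_\u,g_\u}$-term survives. The two inhomogeneous pieces on the right-hand side of \eqref{FDEQ} are handled separately: by Fubini applied to the kernel $(x-y)/|x-y|^3$ together with the definition of $\Phi$, the Poisson contribution $\delx\psi_{f'_\u[h]}\cdot\delv f_\u\, g_\u$ rearranges into exactly $-\int f'_\u[h]\,\Phi_{f_\u,g_\u}$, cancelling the adjoint $\Phi$-term. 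What remains is
\begin{align*}
\bigl\langle f_\u(T)-f_d,\, f'_\u[h](T)\bigr\rangle_{L^2} = -\int_0^T\!\!\int \bigl(v\times B(h)\bigr)\cdot \delv f_\u\, g_\u \dxv \dt = -\sum_{i=1}^N \int_0^T h_i(t)\, p_i(\u)(t)\dt,
\end{align*}
where in the last equality I expand $B(h)(t,x)=\sum_i h_i(t) m_i(x)$ and use the definition of $p_i(\u)$. Combined with the trivial derivative of the penalty term, this yields the claimed formula for $J'(\u)[h]$.

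Steps (ii) and (iii) are then routine. For (ii), convexity of $\UU$ allows me to form $\u+s(u-\u)\in\UU$ for all $s\in[0,1]$; local optimality together with the representation of $J'(\u)[h]$ forces $J'(\u)[u-\u]\ge 0$ for every $u\in\UU$, and since $\UU=\UU_1\times\dots\times\UU_N$, testing with $u$ that differs from $\u$ only in a single component yields (NC1). For (iii), the scalar variational inequality in (NC1) is equivalent, via a Lebesgue-point localization in $t$, to the pointwise condition $\bigl(\lambda_i\u_i(t) - p_i(\u)(t)\bigr)(w - \u_i(t)) \ge 0$ for almost every $t$ and all $w\in[a_i(t),b_i(t)]$; this is precisely the variational characterization of $\u_i(t)$ as the metric projection of $\lambda_i^{-1} p_i(\u)(t)$ onto $[a_i(t),b_i(t)]$, which is (NC2). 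The delicate bookkeeping I expect to worry about is the $\Phi$-cancellation in step (i); once that is settled, the remainder reduces to compact-support integration by parts and standard convex analysis.
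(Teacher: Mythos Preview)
Your proposal is correct and follows essentially the same route as the paper. The paper packages step~(i) as a formal Lagrangian $\mathcal L(f,u,g)$, computes $\partial_f\mathcal L$ and $\partial_u\mathcal L$, and then inserts $g=g_{\u}$ to kill the $\partial_f\mathcal L$-term; your direct testing of the linearized equation \eqref{FDEQ} against $g_{\u}$ and the ensuing $\Phi$-cancellation is exactly that computation unwound, and steps~(ii)--(iii) match the paper (which simply cites \cite{troeltzsch} for the (NC1)$\Leftrightarrow$(NC2) equivalence). One minor point: your inclusion $\supp f'_{\u}[h](t)\subset B_{R_Z}(0)$ is justified not via the abstract radius $\varrho$ of Proposition~\ref{FSO}(c) but because $f'_{\u}[h]$ is the $C([0,T];L^2)$-limit of difference quotients of states whose supports all lie in $B_R(0)\subset B_{R_Z}(0)$.
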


\begin{com}
	$\,$
	\begin{itemize}
		\item [\textnormal{(a)}] We can establish similar results if $\lambda_i=0$. The item (NC1) stays true in this case if we just replace $\lambda_i$ by zero. Instead of (NC2) we only have
		\begin{align*}\\[-7.5mm]
		\u_i(t) = a_i(t)\quad \text{if}\quad p_i(\u)(t) > 0 \quad\tand\quad	\u_i(t) =b_i(t)\quad\text{if}\quad p_i(\u)(t) < 0\\[-7mm]
		\end{align*}
		but $\u_i(t)$ is undefined if $p_i(\u)(t) = 0$.
		\item [\textnormal{(b)}] If $a_i$ and $b_i$ are continuous, so is $\u_i$ due to item (NC2). If this holds for all ${i\in\{1,...,N\}}$ we know that $\u$ is continuous and consequently $B(\u)\in C([0,T];C^{1,\gamma})$. In this case $f_\u$ and $g_\u$ are classical solutions of their respective systems.
	\end{itemize}
\end{com}

\bigskip

\begin{proof}
	The assertion $p_i(\u)\in C([0,T])$ is obvious since $f_{\u}, g_\u\in C([0,T];C^1_b)$ and $m_i\in C^{1,\gamma}(\RR^3;\RR^3)$ for all indices $i\in\{1,...,N\}$. First we will show that item (NC1) holds if $\u$ is a locally optimal solution. Therefore we apply the Lagrangian technique: 
	For $u\in\UU$ and $f,g\in H^1(]0,T[\times\RR^6)$ with $\supp f(t)\subset\BR$ for all $t\in[0,T]$ we define the Lagrangian\vspace{-2mm}
	\begin{align*}
	\Lag(f,u,g)&:= \frac 1 2 \|f(T)-f_d\|_{L^2} + \sum_{i=1}^N \frac {\lambda_i}{2} \|u_i\|_{L^2}^2 
	- \hspace{-3mm}\int\limits_{[0,T]\times \RR^6} \hspace{-3mm}\big( \delt f +v\cdot \delx f - \delx\psi_f\cdot\delv f + (v\times B(u))\cdot\delv f \big)\, g \dtxv\,.
	\end{align*}
	It is possible to replace $[0,T]\times \RR^6$ by $[0,T]\times \BR$ because of the support condition on $f$. As $\divergence_v (v\times B(u)) = 0$, integration by parts yields\vspace{-4mm}
	\begin{align*}
	\Lag(f,u,g)
	&= \frac 1 2 \|f(T)-f_d\|_{L^2} + \sum_{i=1}^N \frac {\lambda_i}{2} \|u_i\|_{L^2}^2 + \langle g(0),f(0) \rangle_{L^2} - \langle g(T),f(T) \rangle_{L^2}\\
	&\qquad + \hspace{-3mm}\int\limits_{[0,T]\times \BR} \hspace{-3mm} \big( \delt g +v\cdot \delx g - \delx\psi_f\cdot\delv g + (v\times B(u))\cdot\delv g \big)\; f \dtxv\;.
	\end{align*}
	The Lagrangian is partially Fréchet differentiable with
	\begin{align*}
	(\partial_f \Lag)(f,u,g)[h]
	&= \langle f(T)-f_d, h(T) \rangle_{L^2} - \langle g(T) , h(T) \rangle_{L^2} + \langle g(0), h(0) \rangle_{L^2}\\[2mm]
	&\;\; + \hspace{-3mm}\int\limits_{[0,T]\times \BR} \hspace{-4mm} \big( \delt g +v\cdot \delx g - \delx\psi_f\cdot\delv g + (v\stimes B(u))\cdot\delv g \big)\, h \dtxv
	- \hspace{-3mm}\int\limits_{[0,T]\times \BR} \hspace{-3mm} \Phi_{f,g}(t,x)\; h \dtxv
	\end{align*}
	for any $h\in H^1(]0,T[\times\RR^6)$ and\vspace{-2mm}
	\begin{align*}
	&(\partial_u \Lag)(f,u,g)[h] = \sum_{i=1}^N \lambda_i \langle u_i,h_i \rangle_{L^2} \;-\hspace{-3mm} \int\limits_{[0,T]\times \RR^6}\hspace{-3mm} (v\times B(h))\cdot\delv f\; g \dtxv
	%
	%
	\end{align*}
	for any $h\in \DD$. Obviously $J(\u)=\Lag(f_\u,\u,g)$ and hence
	\begin{align*}
	J'(\u)[h] = (\partial_f \Lag)(f_\u,\u,g)\big[f'_{\u}[h]\big] + (\partial_u \Lag)(f_\u,\u,g)[h] 
	\end{align*}
	for all $h\in\DD$. Now, if $\u$ is a local minimizer of $J$ then $J'(\u)[h]$ is nonnegative for all directions $h\in\DD$ with $u+h\in\UU$. Thus inserting $g=g_{\u}$ yields $(\partial_f \Lag)(f_\u,\u,g)\big[f'_{\u}[h]\big]=0$ and thus
	\begin{align}
	\label{FDJ}
	0 \le J'(\u)h = (\partial_u \Lag)(f_\u,\u,g_\u)[h] = \sum_{i=1}^N\; \int\limits_0^T \big( \lambda_i\, \u_i(t) - p_i(\u)(t) \big) h_i(t) \dt 
	\end{align}
	for all $h\in \DD$ with $u+h\in\UU$. For any fixed $i\in\{1,...,N\}$ we can choose $h_j = 0$ if $j\neq i$ while $h_i$ is still arbitrary. This finally implies that
	\begin{align*}
	\int\limits_0^T ({\lambda_i} \u_i - p_i(\u)) \; h_i \dt \ge 0, \quad  i=1,...,N
	\end{align*}
	for all $h\in \DD$ with $u+h\in\UU$. For any $u\in\UU$ we can now choose $h:=u-\u\in\DD$ and hence we can conclude that for all $u\in\UU$,
	\begin{align*}
	\int\limits_0^T ({\lambda_i} \u_i - p_i(\u)) \; (u_i-\u_i) \;\mathrm dt\ge 0,\quad i=1,...,N
	\end{align*}
	that is (NC1). The equivalence of (NC1) and (NC2) is a standard result (see, e.g., \cite[pp.\,67-71]{troeltzsch}).	
\end{proof}

\bigskip

If $\u\in\UU $ is a locally optimal control we can also show that the triple $(f_\u,g_\u,\u)$ satisfies a certain system of partial differential equation that will be referred to as the \textbf{optimality system} of the optimization problem. A strong solution of the optimality system is defined as follows: \smallskip\pagebreak[3]

\begin{defn}
	Suppose that $\lambda_i>0$ for all $i\in\{1,...,N\}$. The triple $(f,g,u)$ is called a strong solution of the \textbf{optimality system} iff the following conditions hold:
	\begin{itemize}
		\item[\textnormal{(i)}] $f,g\in W^{1,2}(0,T;C_b)\cap C([0,T];C^1_b)$ and $u\in L^2([0,T];\RR^N)$.\vspace{2.5mm}
		\item[\textnormal{(ii)}] For any $t\in[0,T]$, $\supp f(t) \subset B_R(0)$ and $\supp g(t) \subset B_{R^*}(0)$ where $R,R^*>0$ are the constants from {Theorem \ref{GWS}} and {Theorem \ref{ADJS-u}}.\vspace{2.5mm}
		\item[\textnormal{(iii)}] $f$, $g$ and $u$ satisfy the following system of equations almost everywhere:
		\begin{align}
		\label{OS-SEQ-u}
		\hspace{8pt}
		\begin{cases}
		\delt f + v\cdot\delx f  - \delx\psi_{f}\cdot\delv f + (v\times B(u))\cdot\delv f = 0, & f\big\vert_{t=0} = \mathring f,\\[3mm]
		\delt g + v\cdot\delx g  - \delx\psi_{f}\cdot\delv g + (v\times B(u))\cdot\delv g = \Phi_{f,g}\chi, & g\big\vert_{t=T} = f(T)-f_d,\\[3mm]
		u=(u_1,...,u_N)^T \text{ with } u_i = \PP_{[a_i,b_i]} \Big( \frac 1 {\lambda_i} \textstyle\int (v\times m_i)\cdot \delv f\; g \dxv \Big).
		\end{cases}
		\end{align}
	\end{itemize}
\end{defn}

\bigskip
\pagebreak[2]
We obtain the following condition for local optimality:\\[-6mm]

\begin{cor}
	\label{OS2}
	\hypertarget{HOS2}
	Suppose that $\u\in\UU$ is a locally optimal solution of the optimization problem \eqref{OP2}. Then the following holds:
	\begin{itemize}
		\item[\textnormal{\bfseries(NC3)}] $(f_\u,g_\u,\u)$ is a strong solution of the \textbf{optimality system} \eqref{OS-SEQ-u}.
	\end{itemize}
	Moreover, condition \textnormal{(NC3)} is equivalent to the necessary optimality conditions \textnormal{(NC1)} and \textnormal{(NC2)} .
\end{cor}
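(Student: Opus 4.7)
The plan is to reduce the entire statement to the regularity/support results of Proposition \ref{GWS} and Proposition \ref{ADJS-u} together with Theorem \ref{MIN}; no new analysis is required. The key point is that (NC3) is essentially a repackaging of (NC2) combined with the PDEs that $f_\u = \S(\u)$ and $g_\u = \A(\u)$ automatically satisfy by definition.

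First I would show that local optimality of $\u$ implies (NC3). The regularity $f_\u \in W^{1,2}(0,T;C_b)\cap C([0,T];C^1_b)$ and the support condition $\supp f_\u(t)\subset B_R(0)$ are immediate from Proposition \ref{GWS}, which also provides the first equation of \eqref{OS-SEQ-u} almost everywhere together with the initial condition $f_\u|_{t=0} = \mathring f$. Analogously, Proposition \ref{ADJS-u} supplies the regularity of $g_\u$, the support bound $\supp g_\u(t)\subset B_{R^*}(0)$, the second equation of \eqref{OS-SEQ-u}, and the terminal condition $g_\u|_{t=T} = f_\u(T)-f_d$. Since $\u$ is locally optimal, Theorem \ref{MIN} yields (NC2); and by the very definition of $p_i(\u)$, the projection formula of (NC2) is identical to the third line of \eqref{OS-SEQ-u} evaluated at $f = f_\u$, $g = g_\u$. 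Hence items (i), (ii), (iii) of the optimality system are all met.

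For the equivalence of (NC1), (NC2) and (NC3), Theorem \ref{MIN} already gives (NC1)$\Leftrightarrow$(NC2), so it remains to check (NC2)$\Leftrightarrow$(NC3). The implication (NC2)$\Rightarrow$(NC3) is the same bookkeeping as above: a control $\u \in \UU$ satisfying the projection formula automatically induces a triple $(\S(\u), \A(\u), \u)$ fulfilling (i)--(iii), since the first two lines of \eqref{OS-SEQ-u} are built into the definitions of $\S$ and $\A$. The converse (NC3)$\Rightarrow$(NC2) is immediate, because substituting $f = f_\u$ and $g = g_\u$ in the third line of \eqref{OS-SEQ-u} and recognizing
\[
\int \big(v\stimes m_i(x)\big)\scdot \delv f_\u(t,x,v)\; g_\u(t,x,v) \dxv = p_i(\u)(t)
\]
reproduces (NC2) verbatim. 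The only ``obstacle'' is therefore purely notational: one must line up the generic integrand in \eqref{OS-SEQ-u} with the function $p_i(\u)$ introduced in Theorem \ref{MIN}.
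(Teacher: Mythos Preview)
Your proposal is correct and follows the same approach as the paper: the paper's proof simply states that (NC3) is obviously equivalent to (NC2), and you have spelled out precisely why this equivalence holds (the first two lines of \eqref{OS-SEQ-u} are automatic from the definitions of $\S$ and $\A$, so only the projection formula carries content). Your treatment is just a more explicit version of the paper's one-line observation.
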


\begin{proof}
	It is obvious that condition (NC3) is equivalent to (NC2). Hence it is a necessary condition for local optimality that is equivalent to the conditions (NC1) and (NC2) according to Theorem \ref{MIN}.
\end{proof}

\smallskip

Note that for any locally optimal control $\u$ the costate $g_\u$ can be considered as a Lagrangian multiplier with respect to the side condition on $f_\u$ that is the state equation. However, we can also find Lagrangian multipliers with respect to the control restrictions. This makes it possible to write the optimal control problem as a \textbf{Karush-Kuhn-Tucker system}.

\begin{cor}
	Suppose that $\u\in\UU$ is a locally optimal solution of the optimization problem \eqref{OP2}. Then the following holds:
	\begin{itemize}
		\item[\textnormal{\bfseries(NC4)}] $\u$ satisfies the \textbf{Karush-Kuhn-Tucker conditions}, i.e., there exist Lagrangian multipliers $\mu^a,\mu^b$ in \linebreak $L^2([0,T];\RR^N)$ such that:\vspace{1mm}
			\item[\textnormal{(i)}] Primal feasibility: For all $i\in\{1,...,N\}$, $$a_i \le  \u_i\le b_i \quad\text{almost everywhere on}\; [0,T].$$
			\item[\textnormal{(ii)}] Dual feasibility: For all $i\in\{1,...,N\}$, $$ \mu^a_i , \mu^b_i \ge 0 \quad\text{almost everywhere on}\; [0,T].$$
			\item[\textnormal{(iii)}] Complementary slackness: For all $i\in\{1,...,N\}$, $$\mu^a_i (\u_i - a_i) = 0,\quad \mu^b_i (\u_i - b_i) = 0\quad\text{almost everywhere on}\; [0,T].$$ 
			\item[\textnormal{(iv)}] Stationarity: For all $i\in\{1,...,N\}$, $$ \lambda_i\u_i - p_i(\u) - \mu^a_i + \mu^b_i = 0 \quad\text{almost everywhere on}\; [0,T].$$ 
	\end{itemize}
	Moreover, condition \textnormal{(NC4)} is equivalent to the necessary optimality conditions \textnormal{(NC1)} and \textnormal{(NC2)}.
\end{cor}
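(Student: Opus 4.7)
The plan is to deduce the full statement from Theorem~\ref{MIN} by establishing the pointwise equivalence (NC2) $\Leftrightarrow$ (NC4). Since Theorem~\ref{MIN} already yields that local optimality forces (NC2) and that (NC1) $\Leftrightarrow$ (NC2), this equivalence closes the chain; one simultaneously obtains concrete Lagrange multipliers as a byproduct.

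For (NC2) $\Rightarrow$ (NC4), I would construct the multipliers explicitly by splitting the stationarity residual into its positive and negative parts,
$$ \mu^a_i := \bigl(\lambda_i\u_i - p_i(\u)\bigr)^+, \qquad \mu^b_i := \bigl(\lambda_i\u_i - p_i(\u)\bigr)^-, $$
which lie in $L^2([0,T])$ since $\u_i\in L^2([0,T])$ and $p_i(\u)\in C([0,T])$ by Theorem~\ref{MIN}. Primal feasibility (i) is built into $\u\in\UU$, dual feasibility (ii) holds by construction, and stationarity (iv) follows from the identity $q = q^+ - q^-$. Complementary slackness (iii) is the only nontrivial point, and I would verify it by a pointwise case analysis driven by the projection formula (NC2): for almost every $t\in[0,T]$, if $p_i(\u)(t)/\lambda_i \in [a_i(t),b_i(t)]$ then $\u_i(t) = p_i(\u)(t)/\lambda_i$ makes the residual vanish, so both multipliers are zero; if $p_i(\u)(t)/\lambda_i < a_i(t)$ then $\u_i(t) = a_i(t)$ forces $\u_i(t) - a_i(t) = 0$ while the positive residual forces $\mu^b_i(t) = 0$; the upper case is symmetric. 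In each scenario both products in (iii) vanish.

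For the converse (NC4) $\Rightarrow$ (NC2) I would argue pointwise as well. Stationarity gives $\lambda_i\u_i(t) - p_i(\u)(t) = \mu^a_i(t) - \mu^b_i(t)$. If $\u_i(t)\in(a_i(t),b_i(t))$, complementary slackness forces both multipliers to vanish, so $\u_i(t) = p_i(\u)(t)/\lambda_i$ lies in the interval and therefore equals its projection. If $\u_i(t) = a_i(t)$, then $\mu^b_i(t) = 0$ and $\mu^a_i(t) \ge 0$ combine to give $p_i(\u)(t)/\lambda_i \le a_i(t)$, so the projection returns $a_i(t) = \u_i(t)$; the case $\u_i(t) = b_i(t)$ is analogous. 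I do not expect any serious obstacle; the only care needed is with null sets and with the degenerate situation $a_i(t) = b_i(t)$, in which the multipliers are not uniquely determined but both sides of the equivalence hold trivially.
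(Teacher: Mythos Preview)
Your argument is correct and is precisely the standard pointwise construction and case analysis that the paper has in mind; the paper itself does not write out a proof but simply refers to \cite[pp.\,71--73]{troeltzsch}, where exactly this positive/negative-part splitting of the residual and the three-case verification of complementary slackness are carried out. Your handling of the degenerate case $a_i(t)=b_i(t)$ is also appropriate.
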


Since this is a standard result, we do not present a detailed proof. For a similar proof see \cite[pp.\,71-73]{troeltzsch}.

\subsection{Uniqueness of the optimal solution on small time intervals} 

We can show that the solution of the optimality system is unique if the final time $T$ is small compared to the regularization parameters $\lambda_i$. From this, we can conclude that the optimal control is unique on sufficiently small time intervals. This approach to obtain uniqueness has already been used in literature in similar situations (see, e.g., \cite{bradley-lenhart,fister-panetta}). We also want to mention the paper \cite{hinze} where a uniqueness result for the optimal control was established under the condition that the costate is bounded in a certain $L^q$-norm.

\begin{thm}
	\label{UOS2}
	\hypertarget{HUOS2}
	Let $\lambda>0$ be defined by $\lambda := \min \{\lambda_1,...,\lambda_N\}$. Suppose that $\lambda\in ]0,1]$ and let us assume that there exists a strong solution $(f,g,u)$ of the optimality system $\eqref{OS-SEQ-u}$. Then this solution is unique if the quotient $\tfrac T \lambda$ is sufficiently small.
\end{thm}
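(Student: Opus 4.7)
The plan is to prove uniqueness by a contraction argument. Suppose that $(f_1,g_1,u_1)$ and $(f_2,g_2,u_2)$ are two strong solutions of the optimality system \eqref{OS-SEQ-u}, and write $\delta f:=f_1-f_2$, $\delta g:=g_1-g_2$, $\delta u:=u_1-u_2$. Since $u_1,u_2\in\UU$, the corresponding states and costates are uniquely determined by the control through $\S$ and $\A$, so it suffices to show $\delta u\equiv 0$ on $[0,T]$; the identities $\delta f\equiv 0$ and $\delta g\equiv 0$ then follow automatically. The strategy is to bound $\|\delta u\|_{L^2([0,T];\RR^N)}$ by a constant (depending on $T$ and $\lambda$) times itself, with the constant less than one whenever $T/\lambda$ is small enough.

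The first ingredient is Lipschitz dependence in the $C_b$-norm. Because $B(\cdot)$ is linear with $\|B(u_1)-B(u_2)\|_\WW\le M\sqrt N\,\|\delta u\|_{L^2([0,T];\RR^N)}$ by Proposition \ref{BOP}(b), the Lipschitz bound of Proposition \ref{FSO}(a) gives
$$\|\delta f\|_{C([0,T];C_b)}\le C_1\,\|\delta u\|_{L^2([0,T];\RR^N)},$$
and Proposition \ref{ADJS-u} yields the analogous bound $\|\delta g\|_{C([0,T];C_b)}\le C_2\,\|\delta u\|_{L^2([0,T];\RR^N)}$, where $C_1,C_2$ depend only on the fixed data $\mathring f,f_d,T,K,\beta$ but not on $\lambda$. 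The second ingredient is the projection formula (NC2): since $\PP_{[a_i(t),b_i(t)]}$ is nonexpansive on $\RR$ and $\lambda_i\ge\lambda$, we get the pointwise bound
$$|u_{1,i}(t)-u_{2,i}(t)|\le \frac{1}{\lambda}\,|p_i(u_1)(t)-p_i(u_2)(t)|\quad\text{for a.e.\ }t\in[0,T],\ i=1,\dots,N.$$

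The crucial step is to bound $|p_i(u_1)(t)-p_i(u_2)(t)|$ in a Lipschitz fashion by $\|\delta f(t)\|_{C_b}$ and $\|\delta g(t)\|_{C_b}$. Naively splitting
$$p_i(u_1)-p_i(u_2)=\int (v\times m_i)\cdot\delv(\delta f)\,g_1\dxv+\int (v\times m_i)\cdot\delv f_2\,\delta g\dxv$$
is problematic because the first summand involves $\|\delta f\|_{C^1_b}$, for which only the Hölder estimate from Proposition \ref{FSO}(a) is available. The remedy is to integrate the first summand by parts in $v$, using that $\divergence_v(v\times m_i)=0$ and that $g_1(t)$ has compact support:
$$\int (v\times m_i)\cdot\delv(\delta f)\,g_1\dxv=-\int (v\times m_i)\cdot\delta f\,\delv g_1\dxv.$$
Exploiting the uniform $C^1_b$-bounds on $f_2,g_1$ and the uniform support bounds from Proposition \ref{GWS}(d) and Proposition \ref{ADJS-u}, we conclude
$$|p_i(u_1)(t)-p_i(u_2)(t)|\le C_3\big(\|\delta f(t)\|_{C_b}+\|\delta g(t)\|_{C_b}\big),$$
with $C_3$ independent of $\lambda$.

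Combining the three ingredients, squaring and integrating over $[0,T]$, and using the $C([0,T];C_b)$-bounds from the first step yields
$$\|\delta u\|_{L^2([0,T];\RR^N)}^2\le \frac{C_4\,T}{\lambda^2}\,\|\delta u\|_{L^2([0,T];\RR^N)}^2$$
for a constant $C_4$ depending only on the fixed data. Because $\lambda\in(0,1]$, we have $T/\lambda^2\le (T/\lambda)/\lambda\le (T/\lambda)\cdot\lambda^{-1}$; tracing the precise dependence of $C_1,C_2,C_3$ on $T$ (they grow with $T$ but are bounded on a fixed time horizon) and exploiting that $\lambda\le 1$, one verifies that the resulting prefactor tends to zero as $T/\lambda\to 0$, so for $T/\lambda$ sufficiently small the prefactor is strictly less than one, forcing $\delta u\equiv 0$. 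The main obstacle is the third step: without the integration-by-parts trick, the projection estimate would only chain with a Hölder-continuous dependence of $\F$ (and $\A$) in $C^1_b$, which is incompatible with the linear self-bound $\|\delta u\|_{L^2}\le\text{const}\cdot\|\delta u\|_{L^2}$ needed for contraction. The symmetry $\divergence_v(v\times m_i)=0$ is precisely what allows us to restore Lipschitz dependence in $C_b$ and close the estimate.
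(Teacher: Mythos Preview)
Your overall strategy---nonexpansivity of the projection, the splitting of $p_i(u_1)-p_i(u_2)$, and the integration by parts via $\divergence_v(v\times m_i)=0$ to avoid the $C^1_b$-norm of $\delta f$---matches the paper. The gap is in the final closing step. From your chain of inequalities you obtain
\[
\|\delta u\|_{L^2([0,T];\RR^N)}^2\le \frac{C_4(T)\,T}{\lambda^2}\,\|\delta u\|_{L^2([0,T];\RR^N)}^2,
\]
and you then assert that the prefactor tends to zero as $T/\lambda\to 0$. This is not justified: with $C_4$ merely bounded for small $T$, the quantity $T/\lambda^2=(T/\lambda)\cdot\lambda^{-1}$ can be arbitrarily large even when $T/\lambda$ is small (take $\lambda=T^{2/3}$, so $T/\lambda=T^{1/3}\to 0$ while $T/\lambda^2=T^{-1/3}\to\infty$). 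Your parenthetical ``they grow with $T$ but are bounded on a fixed time horizon'' confirms that you are not extracting any smallness from $C_1,C_2$ as $T\to 0$, so the contraction does not close under the stated hypothesis.

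The paper avoids this by \emph{not} invoking the black-box Lipschitz constants of Propositions~\ref{FSO}(a) and~\ref{ADJS-u}. Instead it feeds the pointwise-in-time bound
\[
\|B(u_1)(t)-B(u_2)(t)\|_\infty\le \tfrac{C}{\lambda}\big(\|\delta f(t)\|_\infty+\|\delta g(t)\|_\infty\big)
\]
directly into a Gronwall argument for the characteristic flows $Z_{u_1},Z_{u_2}$, and then Gronwalls once more in $\|\delta f(t)\|_\infty$. This yields
\[
\|\delta f\|_{C([0,T];C_b)}\le C\,\tfrac{T}{\lambda}\exp\!\big(C\tfrac{T}{\lambda}\big)\,\|\delta g\|_{C([0,T];C_b)}
\]
and the symmetric reverse inequality, so the final prefactor is $(C\,T/\lambda)^2\exp(C\,T/\lambda)$, which genuinely depends only on $T/\lambda$. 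Your route can be salvaged if you open up the proofs of the Lipschitz estimates and show that $C_1,C_2$ carry a hidden factor $\sqrt{T}$ (coming from $\int_0^T\|\delta B(\tau)\|_\infty\,d\tau\le\sqrt{T}\,\|\delta B\|_{L^2(0,T;L^\infty)}$ in the characteristic estimate); that would upgrade your prefactor to order $(T/\lambda)^2$. But as written the argument is incomplete.
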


\begin{proof}
	Suppose that the triple $(\tf,\tg,\tu)$ is another strong solution. Let $C\ge 0$ denote some generic constant that may depend on $T$, $N$, $M$, $a$, $b$, $\mathring f$ and $f_d$. Since $\supp f(t)$, $\supp \tf(t) \subset B_R(0)$ we have for almost all $t\in[0,T]$ \vspace{-1mm}
	\begin{align}
	\label{UNIQU}
	|u_i(t)-\tu_i(t)| &= \left|  \PP_{[a_i(t),b_i(t)]}\left(\frac 1 {{\lambda_i}} p_i(u)(t) \right) - \PP_{[a_i(t),b_i(t)]}\left(\frac 1 {{\lambda_i}} p_i(\tilde u)(t) \right) \right|\notag\\[2mm]
	&\le \frac 1 {{\lambda_i}}\, \big| p_i(u)(t) - p_i(\tilde u)(t) \big|\notag\\
	&\le \frac C {{\lambda}}\; \|g_u\|_{C([0,T];C_b)}\|f_u(t) - f_\tu (t)\|_\infty + \frac C {{\lambda}}\; \|f_\tu\|_{C([0,T];C^1_b)}\|g_u(t) - g_\tu (t)\|_\infty
	\end{align}
	and hence\vspace{-2mm}
	\begin{align*}
	\|B(u)(t)-B(\tu)(t)\|_\infty \le \frac C {{\lambda}}\; \|f_u(t) - f_\tu (t)\|_\infty + \frac C {{\lambda}}\; \|g_u(t) - g_\tu (t)\|_\infty\;
	\end{align*}
	where now $C=C(T)$ also depends on $\|g_u\|_{C([0,T];C_b)}$ and $\|f_\tu\|_{C([0,T];C^1_b)}$ and is monotonically increasing in $T$. Note that, for any fixed $T>0$, the $C([0,T];C^1_b)$-norm of $f_\tu$ is bounded uniformly for any $\tu\in\mathbb U$ by virtue of Proposition~\ref{GWS}(a) so that the new constant $C(T)$ can be assumed to be independent of $(\tf,\tg,\tu)$.\pskip
	
	Let now $Z_u=Z_u(s,t,z)$ denote the solution of the characteristic system
	\begin{align*}
	\dot x = v, \quad \dot v = -\delx\psi_f(s,x) + v\times B(u)(s,x)
	\end{align*}
	to the initial condition $Z_u(t,t,z)=z$ and let $Z_\tu$ be defined analogously. Now, let $s,t\in [0,T]$ (without loss of generality, $s\le t$) and $z\in\RR^6$ be arbitrary. Then
	\begin{align*}
	|Z_u(s,t,z) - Z_\tu(s,t,z)| 
	&\le \int\limits_s^t |Z(\tau,t,z)-Z_\tu(\tau,t,z)| + \|\delx\psi_{f-\tf}(\tau)\|_\infty + R\,\|B(u)(\tau)-B(\tu)(\tau)\|_\infty \dtau \\
	&\le \int\limits_s^t |Z(\tau,t,z)-Z_\tu(\tau,t,z)| + \tfrac C \lambda \|f(\tau)-\tf(\tau)\|_\infty +  \tfrac C \lambda \|g(\tau)-\tg(\tau)\|_\infty\dtau.
	\end{align*}
	and thus by Gronwall's lemma,
	\begin{align*}
	&|Z_u(s,t,z) - Z_\tu(s,t,z)| \le \tfrac C \lambda \int\limits_s^t  \|f(\tau)-\tf(\tau)\|_\infty +  \|g(\tau)-\tg(\tau)\|_\infty\dtau.
	\end{align*}
	Analogously to the proof of \cite[Thm.\,8]{knopf}, we can conclude that
	\begin{align}
	\|f - \tf\|_{C([0,T];C_b)} &\le C \tfrac T \lambda \exp\left( C \tfrac T \lambda \right) \|g - \tg\|_{C([0,T];C_b)},\\
	\label{UNIQG}
	\|g - \tg\|_{C([0,T];C_b)} &\le C \tfrac T \lambda \exp\left( C \tfrac T \lambda \right) \|f - \tf\|_{C([0,T];C_b)}
	\end{align}
	and consequently 
	\begin{align*}
	\|f - \tf\|_{C([0,T];C_b)} \le \big(C \tfrac T \lambda \big)^2 \exp\left( C \tfrac T \lambda \right) \|f - \tf\|_{C([0,T];C_b)}.
	\end{align*}
	If now $\tfrac T \lambda$ is sufficiently small, we have $\big(C \tfrac T \lambda \big)^2 \exp\left( C \tfrac T \lambda \right) < 1$ and it follows that $	\|f - \tf\|_{C([0,T];C_b)}=0$. This means that $f=\tf$ and then we obtain $g=\tg$ by \eqref{UNIQG} and finally $u=\tu$ by \eqref{UNIQU}.
\end{proof}

\bigskip

Finally, we can easily deduce uniqueness of the globally optimal solution if $\frac T \lambda$ is small:

\begin{cor}
	Let $\lambda$ denote the parameter from Theorem~\ref{UOS2}. Again, we assume that $\lambda\in ]0,1]$ and that $\tfrac T \lambda$ is sufficiently small. Then the following holds:
	\begin{itemize}
		\itema The optimal control problem \eqref{OP2} has a unique globally optimal solution.
		\itemb $\u$ is the unique globally optimal solution of \eqref{OP2} if and only if $\u$ satisfies the optimality conditions \textnormal{(NC1)-(NC4)}. In this case, \textnormal{(NC1)-(NC4)} are necessary and sufficient conditions for global optimality.
	\end{itemize}
\end{cor}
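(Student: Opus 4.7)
The plan is to combine the three pillars already in place: the existence result of Theorem~\ref{NOC2}, the equivalence of the necessary optimality conditions (NC1)–(NC4) with the optimality system (NC3) established in Theorem~\ref{MIN} and Corollary~\ref{OS2}, and the uniqueness theorem for the optimality system (Theorem~\ref{UOS2}). Part (a) is extracted by noting that a global minimizer must in particular be a local minimizer, so it gives rise to a strong solution of the optimality system; uniqueness of such solutions then forces uniqueness of the global minimizer. Part (b) is then almost immediate from the chain of equivalences (NC1)$\Leftrightarrow$(NC2)$\Leftrightarrow$(NC3)$\Leftrightarrow$(NC4).

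In detail, I would first invoke Theorem~\ref{NOC2} to obtain at least one globally optimal control $\u\in\UU$. Since $\u$ is in particular locally optimal, Corollary~\ref{OS2} yields that $(f_\u,g_\u,\u)$ is a strong solution of the optimality system \eqref{OS-SEQ-u}. Provided $\tfrac T\lambda$ is below the threshold fixed in Theorem~\ref{UOS2}, that theorem guarantees that any two strong solutions of \eqref{OS-SEQ-u} coincide. Consequently, any other globally optimal control $\tu$ would also have to satisfy $(f_\tu,g_\tu,\tu)=(f_\u,g_\u,\u)$, in particular $\tu=\u$. This gives (a).

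For (b), necessity of (NC1)–(NC4) for global optimality is already contained in Theorem~\ref{MIN} and Corollary~\ref{OS2}, since global optimality implies local optimality. For sufficiency, suppose that some $\u\in\UU$ fulfils any (and hence, by the already proved equivalences, all) of the conditions (NC1)–(NC4). Then (NC3) holds, so $(f_\u,g_\u,\u)$ is a strong solution of the optimality system. By Theorem~\ref{UOS2} applied under the same smallness condition on $\tfrac T\lambda$, this solution is unique, hence must coincide with the triple associated to the global minimizer produced in (a). Therefore $\u$ is the unique globally optimal solution of \eqref{OP2}.

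The one point that has to be checked carefully is that the smallness threshold in Theorem~\ref{UOS2} does not depend on the particular candidate $\u$: when we want to apply it to compare two hypothetical global minimizers or to compare a candidate satisfying (NC3) with the known minimizer, the constant $C$ appearing there must be uniform over $\UU$. This is precisely the point already flagged in the proof of Theorem~\ref{UOS2}, namely that $\|f_\tu\|_{C([0,T];C^1_b)}$ and $\|g_\tu\|_{C([0,T];C_b)}$ are bounded by constants depending only on $\mathring f,f_d,T,K,\beta$ by virtue of Proposition~\ref{GWS}(a) and Proposition~\ref{ADJS-u}. Hence the smallness threshold can be chosen independently of the solution compared, and the argument above closes without circularity. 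This is the only mildly delicate step; everything else is bookkeeping built on the earlier results.
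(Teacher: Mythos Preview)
Your proof is correct and follows essentially the same route as the paper: existence from Theorem~\ref{NOC2}, global $\Rightarrow$ local $\Rightarrow$ (NC3), then uniqueness of the optimality system via Theorem~\ref{UOS2} to force uniqueness of the minimizer, and the same comparison argument for (b). Your extra paragraph on the uniformity of the smallness threshold over $\UU$ is a welcome clarification that the paper leaves implicit (it was addressed inside the proof of Theorem~\ref{UOS2}).
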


\begin{proof} 
	First note that there exists at least one globally optimal solution $\u$ according to Theorem \ref{NOC2}. Then, of course, $\u$ is also locally optimal and satisfies condition (NC3). If now $\tfrac T \lambda$ is sufficiently small, we can conclude from Theorem \ref{UOS2} that $\u$ is uniquely determined by the optimality system. Hence there is exactly one globally optimal solution of the optimal control problem and that is $\u$. This proves (a).\pskip
	
	If $\u$ is the unique globally optimal solution it is obviously also locally optimal and satisfies the equivalent conditions for local optimality (NC1)-(NC4). To prove the reverse implication, we will now assume that there exists some control $u\in\UU$ that satisfies the conditions (NC1)-(NC4). It remains to show that $u$ is then the unique globally optimal solution. Recall that according to item (a) there is at least one unique globally optimal solution $\u$ of the optimal control problem. As both $u$ and $\u$ satisfy the necessary condition (NC3) it must hold that $u=\u$ if $\tfrac T \lambda$ is small enough. This proves the equivalence assertion of item (b). Therefore, in this case, (NC1)-(NC4) are necessary and sufficient conditions for global optimality as they are satisfied only by the unique globally optimal solution.
\end{proof}

\subsection{A sufficient condition for local optimality}

In the previous section, we have showed that the necessary conditions (NC1)-(NC4) are also sufficient conditions if $\frac T \lambda$ is small enough. We will now establish a sufficient condition for (even strict) local optimality without direct restrictions on $T$ and $\lambda$. Therefore we will need the cost functional $J$ to be twice continuously Fréchet differentiable. Unfortunately, we cannot prove that the control-to-state operator $\S$ is twice Fréchet differentiable, which is due to the following: Linearizing \eqref{FDEQ} once more, we see that the Vlasov equation which $\F''(u)(h,\th)$ should satisfy contains terms $\delv\big(\F'(u)[h]\big)$ and $\delv\big(\F'(u)[\th]\big)$ as parts of a source term. Since $\F'(u)[h]$ and $\F'(u)[\th]$ are only of class $H^1$, the above derivatives are only of class $L^2$ which is not enough to show solvability of that new twice linearized Vlasov equation (cf. \cite{knopf}). Also, standard approaches via the implicit function theorem fail due to the loss of regularity of solutions to the linearized equation \eqref{FDEQ}.\pskip

However, it will do fine to have first-order Fréchet differentiability of the field-costate operator $\A$. This is established by the following Lemma:

\begin{prop}
	\label{FDCCSO2}
	\hypertarget{HFDCCSO2}
	The field-costate operator $\A$ is Fréchet differentiable on $\UU$. For any control $u\in\UU$ and any direction $h\in\DD$, the Fréchet derivative $\A'(u)[h]=g'_u[h]$ is the unique strong solution of the initial value problem
	\begin{align}
	\label{FDEQ2}
	\begin{cases}
	& \hspace{-4mm}\delt g + v\scdot\delx g - \delx\psi_{f'_u[h]}\scdot\delv g_u - \delx\psi_{f_u}\scdot\delv g 
	+ \big(v\stimes B(u)\big)\scdot\delv g + (v\stimes B(h))\scdot\delv g_B 
	= \Phi_{f_u,g} \chi - \Phi_{g_u,f'_u[h]}\chi, \\[2mm]
	& \hspace{-4mm}g\big\vert_{t=T} = 0.
	\end{cases}
	\end{align}
	where $\Phi$ is the operator from Proposition \ref{ADJS-u}. This means that $\A'(u)[h]$ lies in  $(L^\infty \cap H^1)(]0,T[\times\RR) \subset C([0,T];L^2)$, satisfies \eqref{FDEQ2} almost everywhere and there exists some radius $\varrho>0$ depending only on $T,K,\mathring f$ and $\beta$ such that $\supp g'_u[h](t) \subset B_\varrho(0)$ for all $t\in[0,T]$.\pskip
	
	\noindent Moreover, the Fréchet derivative $\A'(u)[h]$ depends Hölder-continuously on $u\in\UU$ in the following sense: There exists some constant $C>0$ depending only on $\mathring f$, $T$, $K$ and $\beta$ such that for all $u,\tu\in\UU$ and $h\in\DD$,
	\begin{align} 
	\label{CFDEQ2}
	\underset{\|h\|_{L^2}\le 1}{\sup}\; \|\A'(u)[h] - \A'(\tu)[h]\|_{L^2(0,T;L^2)} \le C\, \|u-\tu\|_\DD^\gamma. 
	\end{align}
\end{prop}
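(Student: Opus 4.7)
I will follow the three-step blueprint already established for the control-to-state operator in Proposition \ref{FSO}(c): first, construct a candidate derivative as the unique strong solution of the linear equation \eqref{FDEQ2}; second, verify it is the Fréchet derivative via a remainder estimate on $g_{u+h}-g_u-g'_u[h]$; third, derive the Hölder bound \eqref{CFDEQ2} from the stability of \eqref{FDEQ2} with respect to $u$.

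For the first step, I would fix $u\in\UU$ and $h\in\DD$ and note that $f_u$, $g_u$, $f'_u[h]$, $B(u)$ and $B(h)$ are known functions with the regularity supplied by Propositions \ref{FSO} and \ref{ADJS-u}. Equation \eqref{FDEQ2} can therefore be rewritten as a linear, backward-in-time, nonlocal transport problem
\begin{align*}
\delt g + v\scdot\delx g - \delx\psi_{f_u}\scdot\delv g + (v\stimes B(u))\scdot\delv g - \Phi_{f_u,g}\chi = S_h, \qquad g\big\vert_{t=T}=0,
\end{align*}
where $S_h$ depends linearly on $h$ and not on $g$. The differential operator on the left is structurally identical to that of the costate equation \eqref{COSTEQ2}, so the existence/uniqueness argument from Proposition \ref{ADJS-u} (i.e., integration along the backward characteristics of $B(u)$ together with a fixed-point argument in $C([0,T];C_b)$ absorbing the nonlocal term $\Phi_{f_u,g}\chi$) adapts verbatim to this inhomogeneous problem and provides the unique strong solution $g'_u[h]$ with the stated regularity and a uniform support radius $\varrho$. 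Linearity of the equation in $h$ immediately translates into linearity and boundedness of $h\mapsto g'_u[h]$.

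For the second step, I would set $w_h := g_{u+h}-g_u-g'_u[h]$ and $r_h := f_{u+h}-f_u-f'_u[h]$ and derive, by subtracting the costate equations for $g_{u+h}$ and $g_u$ from \eqref{FDEQ2}, that $w_h$ satisfies the linear problem of Step 1 with a source term that splits into a linear part containing $r_h$ (through $\delx\psi_{r_h}\scdot\delv g_u$ and $\Phi_{r_h,g_u}\chi$) and a quadratic remainder involving products such as $\delx\psi_{f_{u+h}-f_u}\scdot\delv(g_{u+h}-g_u)$, $(v\stimes B(h))\scdot\delv(g_{u+h}-g_u)$ and the corresponding $\Phi$-terms. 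Proposition \ref{FSO}(c) yields $\|r_h\|_{C([0,T];C_b)} = o(\|h\|_\DD)$, while Propositions \ref{FSO}(a) and \ref{ADJS-u} give Hölder control of $f_{u+h}-f_u$ and $g_{u+h}-g_u$ in $C([0,T];C^1_b)$ by $\|h\|_\DD^\gamma$. Combining these in a Grönwall estimate along the characteristics of $B(u)$ then produces $\|w_h\|_{C([0,T];L^2)} = o(\|h\|_\DD)$, identifying $\A'(u)[h]=g'_u[h]$.

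Finally, for the Hölder estimate \eqref{CFDEQ2}, I would study $d_h := \A'(u)[h]-\A'(\tu)[h]$, whose governing equation differs from \eqref{FDEQ2} only by terms expressible through $f_u-f_\tu$, $g_u-g_\tu$, $f'_u[h]-f'_\tu[h]$ and $B(u)-B(\tu)$; each of these differences is controlled by $\|u-\tu\|_\DD^\gamma\|h\|_\DD$ via Propositions \ref{FSO}(a),(c), \ref{BOP}(b) and \ref{ADJS-u}. A further Grönwall estimate along the characteristics of $B(u)$ and a supremum over $\|h\|_\DD\le 1$ then yields \eqref{CFDEQ2}. The main technical obstacle throughout is the nonlocal operator $\Phi$, which contains a $v$-derivative of one of its arguments: when that argument is a low-regularity object such as $r_h$ or $f'_u[h]-f'_\tu[h]$ (which is only $H^1$ in space-time, cf. the discussion preceding this proposition), I expect to shift the $\delv$ onto the other, smoother argument by integration by parts, exploiting the uniformly compact supports supplied by Proposition \ref{FSO}(c) and Step 1. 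This is precisely the reason why the remainder and stability bounds are natural in $L^2$ rather than in $C_b$, and why the analogous argument fails for second-order differentiability of $\S$.
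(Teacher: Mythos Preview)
Your proposal is correct in spirit and would ultimately succeed, but it takes a considerably longer route than the paper. The paper's proof is essentially two lines: the field-to-costate operator $B\mapsto g_B$ was already shown to be Fréchet differentiable on $\BB$ in \cite[Lem.\,10]{knopf}, with derivative $g'_B[H]$ solving the analogue of \eqref{FDEQ2} and satisfying the Hölder bound
\[
\sup_{\|H\|_\WW\le 1}\|g'_A[H]-g'_B[H]\|_{L^2(0,T;L^2)}\le C\|A-B\|_\WW^\gamma.
\]
Since $u\mapsto B(u)$ is linear and bounded (Proposition \ref{BOP}(b),(c)), the chain rule immediately gives $\A'(u)[h]=g'_{B(u)}[B(h)]$, and the Hölder estimate \eqref{CFDEQ2} follows from the one above together with $\|B(u)-B(\tu)\|_\WW\le C\|u-\tu\|_\DD$.

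What you propose instead is to redo from scratch the content of \cite[Lem.\,10]{knopf} directly at the level of the control $u$: solve the linearized equation, subtract to obtain a remainder equation for $w_h$, and run Grönwall arguments. This is a perfectly valid and more self-contained argument, and your observation about shifting $\delv$ in the $\Phi$-terms onto the smoother factor via integration by parts is exactly the device that makes the $L^2$ estimates close. The trade-off is clear: the paper's chain-rule argument is short because the hard work (your Steps 1--3) was already done in the earlier paper at the level of the field $B$; your approach reproduces that work but avoids the external citation.
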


\begin{proof}
	We already know from \cite[Lem. 10]{knopf} (which obviously holds true if the space $\VV=\WW\cap L^2(0,T;H^1)$ is replaced by $\WW$) that the operator 
	$$g.:\BB\to C\big([0,T];L^2(\RR^6)\big),\quad B\mapsto g_B$$ 
	is Fréchet differentiable. The Fréchet derivative $g'_B[H]$ is the unique strong solution of\vspace{-2mm}
	\begin{align}
	\label{GDEQ}
	\begin{cases}
	& \hspace{-3mm}\delt g + v\cdot\delx g - \delx\psi_{f'_B[H]}\cdot\delv g_u - \delx\psi_{f_B}\cdot\delv g 
	+ \big(v\stimes B\big)\cdot\delv g + (v\stimes H)\cdot\delv g_B 
	= \Phi_{f_B,g} \chi - \Phi_{g_B,f'_B[H]}\chi, \\[2mm]
	& \hspace{-3mm}g\big\vert_{t=T} = 0.
	\end{cases}
	\end{align}
	This means that $g'_B[H]$ lies in $L^\infty \cap H^1(]0,T[\times\RR) \subset C([0,T];L^2)$ satisfies \eqref{GDEQ} almost everywhere and there exists $\varrho>0$ depending only on $T,K,\mathring f$ and $\beta$ such that $\supp g'_B[H](t) \subset B_\varrho(0)$ for all $t\in[0,T]$ and\vspace{-1mm}
	\begin{align}
	\label{CFG}
	\underset{\|H\|_\WW \le 1}{\sup}\;\|g_A'[H] - g_B'[H]\|_{L^2(0,T;L^2)} \le C \;\|A-B\|_{\WW}^{\gamma}, \quad A,B \in \BB.
	\end{align}
	As the control-to-field operator $u\mapsto B(u)$ is Fréchet differentiable according to Proposition \ref{BOP}(c), the chain rule implies that the control-costate operator  $u\mapsto g_u = g_{B(u)}$ is also Fréchet differentiable and the Fréchet derivative is given by $g'_u[h] = g'_{B(u)}[B(h)]$. Hence, $g'_u[h]$ is the unique strong solution of \eqref{FDEQ2} with $\supp g'_u[h](t) \subset B_\varrho(0)$ for all $t\in[0,T]$ and condition \eqref{CFDEQ2} holds.
\end{proof}

\bigskip

From this result we can conclude that the cost functional is twice continuously Fréchet differentiable. 

\pagebreak[3]

\begin{prop}
	\label{TFDJ2}
	\hypertarget{HTFDJ2}
	$\;$
	\begin{itemize}
		\itema The cost functional $J$ of the optimization problem \eqref{OP2} is twice Fréchet differentiable on $\UU$. The Fréchet derivative of second order at the point $u\in\UU$ can be described as a bilinear operator 
		\begin{align*}
		&J''(u):\DD^2 \to \RR, \\
		&J''(u)[h,\th] 
		= \sum_{i=1}^N\left\{ 
		\lambda_i\, \langle h_i, \th_i \rangle_{L^2([0,T])} - \hspace{-3mm}\int\limits_{[0,T]\times\RR^6}\hspace{-3mm} (v\stimes m_i)\scdot \big( \delv f_u\, g_u'[\th] - \delv g_u\, f_u'[\th] \big)\, h_i \dtxv
		\right\}
		\end{align*}
		for all $h,\th \in \DD$. 
		\itemb There exists some constant $C>0$ depending only on $\mathring f$, $f_d$, $N$, $M$, $a$, $b$, $T$ and $\beta$ such that for all $u,\tu\in\UU$,
		\begin{align}
		\label{CFDJ}
		\|J''(u) - J''(\tu) \| \le C\, \|u-\tu\|_{L^2([0,T];\RR^N)}^\gamma
		\end{align}
		where
		\begin{align*}
		\| J''(u) \| = \sup\Big\{ \big|J''(u)[h_1,h_2] \big| \,\Big\vert\, \|h_1\|_{L^2([0,T];\RR^N)} = 1 ,\, \|h_2\|_{L^2([0,T];\RR^N)} = 1\Big\}
		\end{align*}
		denotes the operator norm. This means that $J$ is twice continuously differentiable.
	\end{itemize}
\end{prop}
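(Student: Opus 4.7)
The plan is to differentiate the first-order formula
$$J'(u)[h] = \sum_{i=1}^N \int_0^T \bigl(\lambda_i u_i(t)-p_i(u)(t)\bigr)h_i(t)\dt,\qquad p_i(u)(t)=\int(v\stimes m_i)\scdot\delv f_u\,g_u\dxv,$$
obtained in Theorem \ref{MIN}, once more in $u$. The quadratic part contributes exactly $\sum_i \lambda_i\langle h_i,\th_i\rangle_{L^2([0,T])}$. The $p_i$-term depends bilinearly on the pair $(f_u,g_u)$, so the chain rule together with the Fréchet differentiability of $\S$ (Proposition \ref{BOP}(d)) and of $\A$ (Proposition \ref{FDCCSO2}) \emph{formally} gives
$$\frac{d}{du}p_i(u)[\th] = \int(v\stimes m_i)\scdot\bigl(\delv f'_u[\th]\,g_u + \delv f_u\,g'_u[\th]\bigr)\dxv.$$

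The first summand is delicate since $f'_u[\th]$ lies only in $H^1$, so $\delv f'_u[\th]$ is merely $L^2$. The key device is the identity $\divergence_v(v\stimes m_i)=0$: combined with the compact $(x,v)$-support of $f'_u[\th]$ from Proposition \ref{FSO}(c), it permits integration by parts in $v$,
$$\int(v\stimes m_i)\scdot\delv f'_u[\th]\,g_u\dxv = -\int(v\stimes m_i)\scdot\delv g_u\,f'_u[\th]\dxv,$$
which replaces $\delv f'_u[\th]$ by the well-behaved factor $\delv g_u\in C([0,T];C_b)$ paired against $f'_u[\th]\in C([0,T];L^2)$; this yields precisely the asserted formula for $J''$. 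To make the derivative rigorous I would decompose $f_{u+\th}-f_u=f'_u[\th]+\rho_f$ and $g_{u+\th}-g_u=g'_u[\th]+\rho_g$ with $\rho_f,\rho_g=o(\|\th\|_\DD)$ in $C([0,T];L^2)$, and control the remaining cross term $\int(v\stimes m_i)(\delv f_{u+\th}-\delv f_u)\,g'_u[\th]\dxv$ by the Hölder continuity of $\S$ in $C([0,T];C^1_b)$ from Proposition \ref{FSO}(a) (composed with Proposition \ref{BOP}(b)) together with a standard uniform bound $\|g'_u[\th]\|_{L^2(0,T;L^2)}\le C\|\th\|_\DD$ coming from the linear equation \eqref{FDEQ2}; this contributes a remainder of order $\|\th\|^{1+\gamma}=o(\|\th\|)$ in $L^2([0,T])$.

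For the Hölder continuity (b) I would expand the difference $J''(u)[h,\th]-J''(\tu)[h,\th]$ into four pieces of the two generic shapes
$$\int(v\stimes m_i)\scdot(\delv f_u-\delv f_\tu)\,g'_u[\th]\,h_i\dtxv\quad\text{and}\quad\int(v\stimes m_i)\scdot\delv f_\tu\,\bigl(g'_u[\th]-g'_\tu[\th]\bigr)\,h_i\dtxv,$$
plus the analogous pair with the roles of $f$ and $g$ exchanged. Terms of the first kind are controlled by the Hölder continuity of $\S$ (respectively $\A$) in $C([0,T];C^1_b)$ combined with the uniform $L^2$-bound on $g'_u[\th]$ (respectively $f'_u[\th]$); terms of the second kind are dominated directly by \eqref{CFDEQ2} and its counterpart for $\S'$ in Proposition \ref{FSO}(c). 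Each contribution is thus of size $C\|u-\tu\|_\DD^\gamma\|h\|_\DD\|\th\|_\DD$, and taking the supremum over unit $h,\th$ yields \eqref{CFDJ}. The main obstacle throughout is the low regularity of $\F'$: since $\S$ is not twice Fréchet differentiable, one is forced to work through the costate $g_u$, and only the divergence-free identity $\divergence_v(v\stimes m_i)=0$ allows the $\delv$ falling on $f'_u[\th]$ to be absorbed onto $g_u$, which is the conceptual heart of the argument.
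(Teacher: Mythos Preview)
Your proposal is correct and follows essentially the same route as the paper: the same first-order formula is differentiated, the same integration by parts via $\divergence_v(v\stimes m_i)=0$ shifts $\delv$ from $f'_u[\th]$ onto $g_u$, the same Taylor decompositions $f_{u+\th}-f_u=f'_u[\th]+f^R_u[\th]$ and $g_{u+\th}-g_u=g'_u[\th]+g^R_u[\th]$ are used, and the four-term splitting for part (b) is identical. The only point the paper adds that you omit is a brief enlargement of $\UU$ to an open set $\UU^\eps\supset\UU$ with $B(\UU^\eps)\subset\IBB$, so that Fr\'echet differentiability can be discussed on an open neighborhood rather than on the closed box $\UU$ itself; also, the paper organizes the cross term as $(\delv f_{u+\th}-\delv f_u)(g_{u+\th}-g_u)$ rather than your $(\delv f_{u+\th}-\delv f_u)\,g'_u[\th]$, but the two differ only by higher-order pieces and are estimated identically.
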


\begin{proof}
	Let $C>0$ denote some generic constant depending only on $\mathring f$, $f_d$, $N$, $M$, $a$, $b$, $T$ and $\beta$. In \eqref{FDJ} we have already proved that
	\begin{align*}
	J'(u)[h] = \sum_{i=1}^N\; \int\limits_0^T \left[ \lambda_i\, u_i(t) - \left( \int (v\times m_i)\cdot\delv f_u\, g_u \dxv \right) \right] h_i(t) \dt
	\end{align*}
	for any $u\in\UU$ and all $h\in\DD$. We will now prove that $J'(u)$ is once more Fréchet differentiable with respect to $u$. For $\eps>0$ we define the sets
	\begin{align*}
	&\UU_i^\eps:= \big\{ u\in\DD \;\big\vert\; \|u\|_{L^2([0,T])} < \|a_i\|_{L^2([0,T])} + \|b_i\|_{L^2([0,T])} + \eps \big\},\quad i=1,...,N,\\
	&\UU^\eps:=\UU_1^\eps \times \;...\; \times \UU_N^\eps\;.
	\end{align*}
	Then, $\UU^\eps$ is open with $\UU\subsetneq \UU^\eps$ and, similarly to the proof of Proposition \ref{BOP}(b), we can conclude that $B(\UU^\eps) \subset \IBB$ if $\eps$ is sufficiently small. Let now $u$ be any function in $\UU^\eps$ and let $\th\in\DD$ be arbitrary with $u+\th\in\UU^\eps$. By Taylor expansion we obtain the decompositions
	\begin{align}
	\label{TEFG}
	f_{u+\th} - f_u = f'_u[\th] + f^R_u[\th] \tand g_{u+\th} - g_u = g'_u[\th] + g^R_u[\th]
	\end{align}
	where $f^R_u[\th], g^R_u[\th] \in C([0,T];L^2)$. It follows from \cite[Thm.\,3,\,Lem.\,10]{knopf} and the boundedness of $B(\cdot)$ that
	\begin{align}
	\label{TEFG2}
	\begin{aligned}
	\|f^R_u[\th]\|_{C([0,T];L^2)} \le C\|B(\th)\|_{\WW}^{1+\gamma} \le C\|\th\|_{L^2([0,T];\RR^N)}^{1+\gamma},\quad
	\|g^R_u[\th]\|_{C([0,T];L^2)} \le C\|B(\th)\|_{\WW}^{1+\gamma} \le C\|\th\|_{L^2([0,T];\RR^N)}^{1+\gamma}
	\end{aligned}
	\end{align}
	where $\gamma=1-\nicefrac 3 \beta>0$ denotes the constant from Proposition \ref{SAC}. Now, by integration by parts,
	\begin{align*}
	&J'(u+\th)[h] - J'(u)[h] \\
	&\quad=  \sum_{i=1}^N \Bigg\{ \lambda_i \langle h_i, \th_i \rangle_{L^2([0,T])} \;- \hspace{-3mm}\int\limits_{[0,T]\times\RR^6} \hspace{-3mm}(v\stimes m_i)\cdot (\delv f_{u+\th}- \delv f_u) (g_{u+\th}-g_u) \, h_i\dtxv \\
	&\qquad\qquad - \int\limits_{[0,T]\times\RR^6} \hspace{-3mm}(v\stimes m_i)\cdot \big[\delv f_u (g_{u+\th}-g_u) - \delv g_u (f_{u+\th}-f_u)\big] h_i\dtxv \Bigg\}.
	\end{align*}
	Inserting \eqref{TEFG} then yields
	\begin{align}
	\label{EQ:JDIFF}
	&J'(u+\th)[h] - J'(u)[h] \notag\\
	&\quad= \sum_{i=1}^N\hspace{-2pt}\Bigg\{ 
	\lambda_i \langle h_i, \th_i \rangle_{L^2([0,T])} - \hspace{-4mm}\int\limits_{[0,T]\times\RR^6}\hspace{-4mm} (v\stimes m_i)\scdot \big( \delv f_u\, g_u'[\th] - \delv g_u\, f_u'[\th] \big) h_i \dtxv
	\Bigg\} + \mathcal R(h,\th)
	\end{align}
	where
	\begin{align*}
	\mathcal R(h,\th) &:= - \sum_{i=1}^N \Bigg\{ \, \int\limits_{[0,T]\times\RR^6} \hspace{-3mm}(v\stimes m_i)\cdot (\delv f_u\, g^R_u[\th] - \delv g_u\, f^R_u[\th])\, h_i\dtxv\\
	&\qquad\qquad\qquad + \hspace{-4mm}\int\limits_{[0,T]\times\RR^6} \hspace{-3mm}(v\stimes m_i)\cdot (\delv f_{u+\th}- \delv f_u) (g_{u+\th}-g_u) h_i\dtxv \Bigg\}
	\end{align*}
	The remainder $\mathcal R(h,\th)$ can be bounded by 
	\begin{align*}
	|\mathcal R(h,\th)| \le C\, \|m_i\|_{L^\infty} \, \|h\|_{L^2([0,T];\RR^N)} &\Big( \|\delv f_u\|_{L^\infty}\, \|g^R_u[\th]\|_{C([0,T];L^2)} 
	+ \|\delv g_u\|_{L^\infty}\, \|f^R_u[\th]\|_{C([0,T];L^2)}\\
	&\qquad + \|\delv f_{u+\th} - \delv f_u\|_{L^\infty} \|g_{u+\th} - g_u\|_{L^2} \Big).
	\end{align*}
	From Proposition 3(a), Proposition \ref{ADJS-u} and \eqref{TEFG2} we deduce that $\mathcal R(\cdot,\th)$ can be bounded in the operator norm by
	\begin{align*}
		\|\mathcal R(\cdot,\th)\| = 
		\sup \big\{ |\mathcal R(h,\th)| : \|h\|_{L^2([0,T];\RR^N)} \le 1 \big\} \le C\|\th\|_{L^2([0,T];\RR^N)}^{1+\gamma} 
	\end{align*}
	Hence, we can conclude from equation \eqref{EQ:JDIFF} that $J$ is twice Fréchet differentiable on $\UU^\eps$ (and thus especially on $\UU$) where the second-order Fréchet derivative is given by
	\begin{align*}
	J''(u)[h,\th]
	= \sum_{i=1}^N\hspace{-2pt}\Bigg\{ 
	\lambda_i \langle h_i, \th_i \rangle_{L^2([0,T])} - \hspace{-4mm}\int\limits_{[0,T]\times\RR^6}\hspace{-4mm} (v\stimes m_i)\scdot \big( \delv f_u\, g_u'[\th] - \delv g_u\, f_u'[\th] \big) h_i \dtxv
	\Bigg\}
	\end{align*}
	since
	\begin{align*}
		\frac{ \|J'(u+\th)[\cdot] - J'(u)[\cdot] - J''(u)[\cdot,\th]\|}{ \|\th\|_{L^2([0,T];\RR^N)} }
		= \frac{ \|\mathcal R(\cdot,\th)\| }{ \|\th\|_{L^2([0,T];\RR^N)} }
		\le \|\th\|_{L^2([0,T];\RR^N)}^\gamma \to 0,
	\end{align*}
	as $\|\th\|_{L^2([0,T];\RR^N)}\to 0$ where $\|\cdot\|$ stands for the operator norm. We point out that the operator $\th\mapsto J''(u)[h,\th]$ is linear and bounded. For any $u,\tu \in \UU$ and $h,\th\in\DD$, 
	\begin{align}
	\label{CFDJ-EST}
	\begin{aligned}
	&|J''(u)[h,\th] - J''(\tu)[h,\th]| \\
	&\quad\le C \, \|h\|_{L^2([0,T];\RR^N)} \Big( \|\delv f_u - \delv f_\tu \|_{L^\infty} \|g'_\tu[\th]\|_{L^2} 
	+ \|\delv g_u - \delv g_\tu \|_{L^\infty} \|f'_u[\th]\|_{L^2} \\
	&\hspace{120pt} + \|f'_u[\th] - f'_\tu[\th] \|_{L^2} \|\delv g_\tu\|_{L^\infty} 
	+ \|g'_u[\th] - g'_\tu[\th] \|_{L^2} \|\delv f_u\|_{L^\infty}\Big).
	\end{aligned}
	\end{align} 
	Note that for $ \|\th\|_\DD \le 1 $ the terms $\|g'_\tu[\th]\|_{L^2}$, $\|f'_u[\th]\|_{L^2}$, $\|\delv g_\tu\|_{L^\infty}$ and $\|\delv f_u\|_{L^\infty}$ are all bounded from above by some constant depending only on $\mathring f, f_d, a,b,M$ and $T$. This is due to \cite[Cor.\,2]{knopf} in combination with Proposition~\ref{BOP}(b). Also recall that
	\begin{align*}
	\left.
	\begin{aligned}
	&\|\delv f_u - \delv f_\tu \|_{L^\infty} \\
	&\|\delv g_u - \delv g_\tu \|_{L^\infty} \\
	&\|f'_u[\th] - f'_\tu[\th] \|_{L^2} \\
	&\|g'_u[\th] - g'_\tu[\th] \|_{L^2}
	\end{aligned}
	\right\} \; \le \; C\, \|u-\tu\|_\DD^\gamma
	\end{align*}
	if $ \|\th\|_\DD \le 1 $ according to the Propositions \ref{FSO}(a), \ref{ADJS-u}, \ref{BOP}(d) and \ref{FDCCSO2}. Hence, if both $ \|h\|_\DD \le 1 $ and $ \|\th\|_\DD \le 1 $, \eqref{CFDJ-EST} implies that
	\begin{align*}
	|J''(u)[h,\th] - J''(\tu)[h,\th]| \le C\, \|u-\tu\|_\DD^\gamma.
	\end{align*} 
	This proves \eqref{CFDJ} and completes the proof of Proposition \ref{TFDJ2}.
\end{proof}

\begin{com}
	Since $J$ is twice continuously Fréchet differentiable, Schwarz's theorem yields that $J''(u)$ is symmetric, i.e. $J''(u)[h,\th]=J''(u)[\th,h]$ for all $u\in\UU$ and $h,\th\in\DD$.
\end{com}

Now that we know that the cost functional is twice continuously differentiable, we can easily establish a sufficient condition for strict local optimality: Let $\u\in\UU$ satisfy the variational inequality (NC1) and suppose that $J''(\u)$ is positive definite, i.e., there exists $c>0$ such that
\begin{align*}
J''(\u)[h,h] \ge c\, \|h\|_\DD^2, \quad h\in\DD.
\end{align*}
Then $\u$ is a strict local minimizer of $J$ on the set $\UU$. However this condition is far too restrictive and would resemble cracking a nut with a sledgehammer. (The authors' attention was drawn to this fact by L. Blank \cite{blank}). Therefore, we will present a much weaker sufficient condition for strict local optimality where only directions $h$ from a certain critical cone $\CC$ have to be taken into account. For general semilinear (elliptic or parabolic) control problems this method was introduced by E. Casas, J.\,C. de los Reyes and F.~Tröltzsch \cite{casas-reyes-troeltzsch}. We will proceed similarly and define the cone of critical directions as follows:

\begin{defn}
	We define the set
	\begin{align}
	\CC:=\left\{ h\in L^2([0,T];\RR^N) \bigvert h\; \text{satisfies condition}\; \eqref{COND:CONE} \right\}
	\end{align}
	where condition \eqref{COND:CONE} reads as follows:
	\begin{align}
	\label{COND:CONE}
	\text{For all}\; i\in\{1,...,N\} \;\text{and almost all}\; t\in[0,T]: \quad h_i(t) = \begin{cases} \ge 0, & \bar u_i(t) = a_i(t) \\ \le 0, & \bar u_i(t) = b_i(t) \\ = 0, & \lambda_i \u_i(t) - p_i(\u)(t)\neq 0 \end{cases}.
	\end{align}
	This set $\CC$ is called the \textbf{cone of critical directions}.
\end{defn}

\pagebreak[2]

Now, we can establish the following sufficient condition for strict local optimality:

\begin{thm}
	Suppose that $\u\in\UU$ and let $f_\u$ and $g_\u$ be its induced state and costate. 
	We assume that the variational inequality \textnormal{(NC1)} holds for all $u\in\UU$, i.e.,
	\begin{align*}
	&\int\limits_0^T \big( \lambda_i \u_i - p_i(\u) \big) (u_i-\u_i) \dt 
	\ge 0,\quad i=1,...,N,\; u\in\UU,
	\end{align*}
	and that for all critical directions $h\in \CC\setminus\{0\}$,
	\begin{align}
	\label{CIQ2}
	&\sum_{i=1}^N \left\{ \lambda_i \; \|h_i\|_{L^2([0,T])}^2
	- \hspace{-3mm}\int\limits_{[0,T]\times\RR^6}\hspace{-3mm} (v\stimes m_i)\cdot \Big( \delv f_\u\, g_\u'[h] - \delv g_\u\; f_\u'[h] \Big) \, h_i \dtxv \right\}
	= J''(\u)[h,h] > 0.
	\end{align}
	In this case $J$ satisfies a quadratic growth condition: There exist $\delta>0$ and $\eps>0$ such that for all $u\in\UU$ with \linebreak ${\|u-\u\|_{L^2([0,T];\RR^N)}<\delta}$,
	\begin{align}
	\label{QGC}
	J(u) \ge J(\u) + \frac \eps 2 \|u-\u\|_{L^2([0,T];\RR^N)}^2
	\end{align}
	and hence $\u$ is even a strict local minimizer of $J$ on the set $\UU$.
\end{thm}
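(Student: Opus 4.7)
The plan is to argue by contradiction, following the classical scheme of Casas, De los Reyes and Tröltzsch, adapted to this Vlasov--Poisson setting. Assuming the quadratic growth condition \eqref{QGC} fails, I would pick $u_k\in\UU$ with $t_k:=\|u_k-\u\|_{L^2([0,T];\RR^N)}\to 0$ and $J(u_k)<J(\u)+\tfrac1k t_k^2$, and set $h_k:=(u_k-\u)/t_k$ so that $\|h_k\|_{L^2([0,T];\RR^N)}=1$. After passing to a subsequence, $h_k\rightharpoonup h$ in $L^2([0,T];\RR^N)$, and the goal is to derive a contradiction with the hypothesis $J''(\u)[h,h]>0$ on $\CC\setminus\{0\}$.

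First I would verify that $h\in\CC$. The two sign conditions in \eqref{COND:CONE} corresponding to $\u_i=a_i$ and $\u_i=b_i$ transfer automatically from $h_k$ to $h$ under weak $L^2$ convergence by testing against characteristic functions. For the active-set condition $h_i=0$ on $\{\lambda_i\u_i-p_i(\u)\ne 0\}$, I would exploit the projection formula \textnormal{(NC2)}, which forces the integrand $(\lambda_i\u_i-p_i(\u))h_{k,i}$ to be pointwise nonnegative. A second-order Taylor expansion of $J$ at $\u$ combined with the uniform boundedness of $\|J''(\xi_k)\|$ from Proposition~\ref{TFDJ2} shows that this nonnegative integrand tends to zero in $L^1([0,T])$; weak $L^2$ convergence of $h_{k,i}$ against the bounded factor $\lambda_i\u_i-p_i(\u)$ then identifies the limit and yields $(\lambda_i\u_i-p_i(\u))h_i=0$ a.e.

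Next I would extract a limsup bound: a Taylor expansion together with $J'(\u)[u_k-\u]\ge 0$ (an immediate consequence of \textnormal{(NC1)} applied to $u=u_k$) and the assumption $J(u_k)-J(\u)<t_k^2/k$ gives $J''(\xi_k)[h_k,h_k]<2/k$, and the Hölder continuity of $J''$ from Proposition~\ref{TFDJ2}(b) lets me replace $\xi_k$ by $\u$ at the cost of a vanishing error on the unit sphere. This yields $\limsup_k J''(\u)[h_k,h_k]\le 0$. In parallel I would establish the matching liminf inequality $\liminf_k J''(\u)[h_k,h_k]\ge J''(\u)[h,h]$: the terms $\lambda_i\|h_{k,i}\|_{L^2}^2$ are weakly lower semicontinuous, while the cross term should pass to the limit by compactness of the linearized solution maps. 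Indeed, by Propositions~\ref{FSO}(c) and~\ref{FDCCSO2}, $f'_\u[h_k]$ and $g'_\u[h_k]$ are bounded in $H^1(\,]0,T[\stimes\RR^6)$ with uniformly compact support, so Rellich's theorem yields strong $L^2$ convergence along a subsequence; the limits are identified by passing to the limit in the linearized equations \eqref{FDEQ} and \eqref{FDEQ2}, whose coefficients depend only on $\u$.

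Together the two bounds force $J''(\u)[h,h]\le 0$. If $h\ne 0$ this already contradicts the hypothesis $J''(\u)[h,h]>0$. If $h=0$, the cross term converges to zero by compactness, so $\sum_i\lambda_i\|h_{k,i}\|_{L^2}^2\to 0$, which contradicts $\sum_i\|h_{k,i}\|_{L^2}^2=1$ together with the standing assumption $\lambda_i>0$ (inherited from Theorem~\ref{MIN}). The main obstacle I anticipate is the weak-to-strong passage in the cross term of $J''(\u)[h_k,h_k]$: once the compactness of $h\mapsto(f'_\u[h],g'_\u[h])$ from $L^2([0,T];\RR^N)$ into $L^2([0,T]\stimes\RR^6)$ is nailed down via the $H^1$-regularity plus compact support of the linearized states and costates, the remaining arguments are routine variational calculus.
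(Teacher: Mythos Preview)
Your proposal is correct and follows essentially the same contradiction scheme as the paper, which is the Casas--de los Reyes--Tr\"oltzsch argument: produce a normalized sequence $h_k\rightharpoonup h$, show $h\in\CC$, derive $\limsup_k J''(\u)[h_k,h_k]\le 0$ from Taylor expansion and the H\"older continuity of $J''$, and combine this with weak lower semicontinuity of $h\mapsto J''(\u)[h,h]$ to force first $h=0$ and then $\|h_k\|\to 0$, contradicting $\|h_k\|=1$.

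The only noteworthy deviation is in how you pass to the limit in the cross term $\mathcal B_i(h_k,h_k)$. The paper argues directly from the fact that $h\mapsto f'_\u[h]$ and $h\mapsto g'_\u[h]$ are bounded linear operators, hence weak-to-weak continuous, and then splits $\mathcal B_i(h_k,h_k)-\mathcal B_i(h,h)$ into two pieces handled by H\"older's inequality and the weak convergence of $h_k$ respectively. You instead invoke the $H^1$ bound plus uniform compact support of $f'_\u[h_k],\,g'_\u[h_k]$ and use Rellich's theorem to upgrade to strong $L^2$ convergence along a subsequence, identifying the limits via the linearized equations. Your route is slightly heavier but makes the product argument completely transparent; the paper's version is shorter but its ``H\"older'' step for $\mathcal B_i(h_k,h_k)-\mathcal B_i(h,h_k)$ tacitly needs more than mere weak convergence of the linearized states, so your compactness argument is in fact the cleaner justification of that step. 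One small imprecision: the factor $\lambda_i\u_i-p_i(\u)$ is in $L^2([0,T])$ rather than $L^\infty$ in general, but that is all you need to test against the weakly convergent $h_{k,i}$.
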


\begin{proof}
	We will proceed analogously to the corresponding proof in \cite{casas-reyes-troeltzsch} and argue by contradiction. Therefore, we assume that $\u$ does not satisfy the quadratic growth condition \eqref{QGC}. Then, there exists a sequence $(u_k)_{k\in\NN}\subset \UU\setminus\{\u\}$ such that
	\begin{align}
	\label{ASS:JU}
	u_k \to \u \;\text{in}\;\DD \tand \forall k\in\NN:\;\; J(\u) + \frac 1 k \|u_k-\u\|_{\DD}^2 > J(u_k)
	\end{align}
	For $k\in\NN$, we define\vspace{-2mm}
	\begin{align*}
	d_k:=  \|u_k-\u\|_{\DD} \tand h_k:= \frac{1}{d_k}(u_k-\u).
	\end{align*}
	Since $\|h_k\|_{\DD}=1$, the Banach-Alaoglu theorem implies that there exists some function $h\in\DD$ such that $h_k\wto h$ in $\DD$ up to a subsequence. We split the argument into several steps:\pskip
	
	\textit{Step 1:} We will show that $J'(\u)[h] = 0$. The mean value theorem yields
	\begin{align*}
	J(u_k)  = J(\u) + d_k J'(v_k)[h_{k}]
	\end{align*}
	where $v_k$ is a point in $\DD$ between $\u$ and $u_k$ and hence
	\begin{align*}
	&J'(v_k)[h_{k}] = \frac 1 {d_k} \big( J(u_k) - J(\u) \big) 
	<\frac 1 {kd_k} \|u_k-\u\|_{\DD}^2 = \frac 1 {k} \|u-\u\|_{\DD}.
	\end{align*}
	Since $u_k \to u$ in $\DD$ we also have $v_k\to \u$ in $\DD$ as $v_k$ was chosen between $\u$ and $u_k$. Because of continuity, this implies that ${f_{v_k}\to f_\u}$ and ${g_{v_k}\to g_\u}$ in $C([0,T];C^1_b(\RR^6)\big)$. Since for general $u\in\UU$ and $h\in\DD$ the derivative $J'(u)[h]$ can be expressed by \eqref{FDJ} one can easily conclude that
	\begin{align*}
	J'(\u)[h] = \underset{k\to\infty}{\lim} \; J'(v_k)[h_k] \le \underset{k\to\infty}{\lim}\;  \frac 1 {k} \|u_k-\u\|_{\DD} = 0.
	\end{align*}
	On the other hand, it follows from condition (NC1) that
	\begin{align*}
	J'(\u)[h] = \frac 1 {d_k} \sum_{i=1}^N \int\limits_0^T \big( \lambda_i \u_i - p_i(\u) \big) (u_{k,i}-\u_i) \dt \ge 0
	\end{align*}
	and $J'(\u)[h] = 0$ immediately follows. \pskip
	
	\textit{Step 2:} We will show that $h\in\CC$. First note that the set
	\begin{align*}
	M_\u:=\left\{ h \in \DD \;\bigg|\; \begin{matrix} \text{For all}\; i\in\{1,...,N\} \;\text{and almost all}\; t\in[0,T]: \\[1mm] h_i(t) \ge 0 \;\text{if}\; \u_i(t) = a_i(t) \;\;\wedge\;\; h_i(t) \le 0 \;\text{if}\; \u_i(t) = b_i(t) \end{matrix} \right\} \subset \DD
	\end{align*}
	is obviously closed and convex. Thus it is weakly closed. One can easily see that for all $k\in\NN$, $u_k-\u$ belongs to $M_\u$ and hence the same holds for $h_k$. Consequently the weak limit $h$ must belong to $M_\u$ as well. Moreover, the result of Step 1 yields
	\begin{align}
	\label{EQ:LAGU}
	\sum_{i=1}^N \int\limits_0^T \big( \lambda_i \u_i(t) - p_i(\u)(t) \big)\; h_i(t) \dt = J'(\u)[h] = 0.
	\end{align}
	It is a well-known result that that the variational inequality (NC1) is equivalent to the so-called pointwise variational inequality (see \cite[p.\,68]{troeltzsch}). This means, it even holds that  $\big( \lambda_i \u_i(t) - p_i(\u)(t) \big)\; h_i(t) \ge 0$ for almost all $t\in [0,T]$ and all $i\in\{1,...,N\}$. Together with \eqref{EQ:LAGU} we can conclude that
	\begin{align*}
	\big( \lambda_i \u_i(t) - p_i(\u)(t) \big)\; h_i(t) = 0, \quad i=1,...,N
	\end{align*}
	for almost all $t\in[0,T]$ which directly yields $h_i(t)=0$ for almost all $t\in[0,T]$ with $\lambda_i \u_i(t) - p_i(\u)(t) \neq 0$. This proves $h\in\CC$.\pskip
	
	\textit{Step 3:} We will now prove that $h=0$. Because of \eqref{CIQ2}, it suffices to show that $J''(\u)[h,h] \le 0$.	
	By a second-order Taylor expansion we obtain that
	\begin{align*}
	J(u_k) &= J(\u) + d_k J'(\u)[h_k] +  \frac{d_k^2}{2}  J''(w_k)[h_k,h_k] \\
	&= J(\u) + d_k J'(\u)[h_k] +  \frac{d_k^2}{2} J''(\u)[h_k,h_k] + \frac{d_k^2}{2} \Big[ J''(w_k)[h_k,h_k] -J''(\u)[h_k,h_k] \Big] 
	\end{align*}
	for any $k\in\NN$ where $w_k$ is a point in $\DD$ between $u_k$ and $\u$. This means that
	\begin{align}
	\label{EQ:TA}
	J''(\u)[h_k,h_k] = \frac{2}{d_k^2}\Big[ J(u_k)-J(\u) \Big] - \frac{2}{d_k} J'(\u)[h_k] - \Big[ J''(w_k)[h_k,h_k] -J''(\u)[h_k,h_k] \Big].
	\end{align}
	First note that, according to \eqref{FDJ} and condition (NC1),
	\begin{align}
	\label{EST:T1}
	\frac{2}{d_k} J'(\u)[h_k] = \frac{2}{d_k^2} J'(\u)[u_k-\u] = \frac{2}{d_k^2} \sum_{i=1}^N \int\limits_0^T \big( \lambda_i \u_i - p_i(\u) \big) (u_{k,i}-\u_i) \dt \ge 0.
	\end{align}
	From \eqref{ASS:JU} it follows that
	\begin{align}
	\label{EST:T2}
	\frac{2}{d_k^2}\Big[ J(u_k)-J(\u) \Big] < \frac{2}{k d_k^2} \|u_k-\u\|_\DD^2 = \frac 2 k.
	\end{align}
	Since $\|h_k\|_\DD = 1$, Proposition \ref{TFDJ2}\,(b) yields
	\begin{align}
	\label{EST:T3}
	&\Big| J''(w_k)[h_k,h_k] -J''(\u)[h_k,h_k] \Big| 
	\le \|w_k-\u\|_\DD^\gamma \le d_k^\gamma.
	\end{align}
	Now, we can use \eqref{EST:T1}-\eqref{EST:T3} to bound the right-hand side of \eqref{EQ:TA} from above. We obtain
	\begin{align}
	\label{LIMSUP}
	J''(\u)[h_k,h_k] < \frac 2 k + d_k^\gamma \to 0, \;\; k\to\infty, \qquad\text{i.e.,}\quad \underset{k\to\infty}{\lim\sup}\; J''(\u)[h_k,h_k] \le 0.
	\end{align}
	From $h_k\wto h$ in $\DD$ we can also deduce that every component $h_{k,i}$ converges to $h_i$ weakly in $L^2([0,T])$. As the norm is weakly lower semicontinuous, it follows that 
	\begin{align}
	\|h_i\|_{L^2([0,T])} \le \underset{k\to\infty}{\lim\inf}\; \|h_{k,i}\|_{L^2([0,T])}, \quad i=1,...,N.
	\end{align}
	For any $i\in\{1,...,N\}$, we will now consider the bilinear functional
	\begin{align*}
	\mathcal B_i(h,\th) := \int\limits_{[0,T]\times\RR^6}\hspace{-3mm} (v\stimes m_i)\cdot \Big( \delv f_\u\, g_\u'[h] - \delv g_\u\; f_\u'[h] \Big) \, \th_i \dtxv, \quad h,\th\in\DD.
	\end{align*}
	Recall that the Fréchet derivatives $f_\u'[\cdot]$ and $g_\u'[\cdot]$ are linear and bounded operators. Thus, by the definition of weak convergence, $f_\u'[h_k] \to f_\u'[h]$ and $g_\u'[h_k]\to g_\u'[h]$ as $k\to\infty$. Using Hölder's inequality we obtain that $\mathcal B_i(h_k,h_k) - \mathcal B_i(h,h_k) \to 0,\; k\to\infty$. Moreover, $\mathcal B_i(h,h_k) - \mathcal B_i(h,h) \to 0$ for $k\to\infty$ follows directly from the weak convergence of $(h_k)$ in $\DD$. In summary, we have
	\begin{align*}
	\mathcal B_i(h_k,h_k) \to \mathcal B_i(h,h), \quad k\to\infty.
	\end{align*}
	Consequently, using \eqref{LIMSUP} we obtain that
	\begin{align*}
	J''(\u)[h,h] &= \sum_{i=1}^N \Big\{ \lambda_i \|h_i\|_{L^2([0,T])}^2 - \mathcal B_i(h,h) \Big\}  
	\le \sum_{i=1}^N \Big\{ \lambda_i \,  \underset{k\to\infty}{\lim\inf}\; \|h_{k,i}\|_{L^2([0,T])}^2 + \underset{k\to\infty}{\lim\inf}\; \big[-\mathcal B_i(h_k,h_k)\big] \Big\} \\[1mm]
	&\le \underset{k\to\infty}{\lim\sup}\; J''(\u)[h_k,h_k] \le 0.
	\end{align*}
	From \eqref{CIQ2} and the fact that $h\in\CC$ we can finally conclude that $h$ must be the null function. \pskip
	
	\textit{Step 4:} We will now show that $\|h_k\|_\DD$ converges to $h=0$ strongly in $\DD$. The bound $J''(\u)[h_k,h_k]\le 2/k + d_k^\gamma$ from \eqref{LIMSUP} particularly implies that
	\begin{align*}
	\sum_{i=1}^N \lambda_i \|h_{k,i}\|_{L^2([0,T])} \le \frac 2 k + d_k^\gamma + \sum_{i=1}^N \mathcal B_i(h_k,h_k) \to 0, \quad k\to\infty.
	\end{align*}
	Thus, $\|h_{k,i}\|_{L^2([0,T])} \to 0$ for all $i\in\{1,...,N\}$ which immediately yields $\|h_{k}\|_\DD \to 0$ as $k\to\infty$. \pskip
	
	However this is a contradiction, because $h_k$ was constructed in such a way that $\|h_k\|_\DD = 1$ for all $k\in\NN$. This means that our initial assumption was wrong and hence $\u$ does satisfy the quadratic growth condition \eqref{QGC}. The proof is complete.	
\end{proof}


\footnotesize

\end{document}